\begin{document}

\title{Weighted Singular Value Thresholding\\ and its Application to Background Estimation}

\author{\name Aritra Dutta \email d.aritra2010@knights.ucf.edu \\
       \addr Department of Mathematics\\
       University of Central Florida\\
       Orlando, FL 32816, USA
       \AND
       \name Boqing Gong \email  bgong@crcv.ucf.edu\\
       \addr Center for Research in Computer Vision\\
       University of Central Florida\\
       Orlando, FL 32816-2365, USA
       \AND
       \name Xin Li  \email xin.li@ucf.edu\\
       \addr Department of Mathematics\\
       University of Central Florida\\
       Orlando, FL 32816, USA
       \AND 
       \name Mubarak Shah \email shah@crcv.ucf.edu\\
       \addr Center for Research in Computer Vision\\
       University of Central Florida\\
       Orlando, FL 32816-2365, USA}

\maketitle

\begin{abstract}
Singular value thresholding~(SVT) plays an important role in the well-known robust principal component analysis~(RPCA) algorithms which have many applications in computer vision and recommendation systems.~In this paper, we formulate and study a {\it weighted} singular value thresholding~(WSVT) problem, which uses a combination of the nuclear norm and a weighted Frobenius norm.~We present an algorithm to numerically solve WSVT and establish the convergence of the algorithm.~As a proof of concept, we apply WSVT with a simple choice of weight learned from the data to the background estimation problem in computer vision.~The numerical experiments show that our method can outperform RPCA algorithms.~This indicates that instead of tackling the computationally expensive $\ell_1$ norm employed in RPCA, one may switch to the weighted Frobenius norm and achieve about the same or even better performance. 
\end{abstract}
\begin{keywords}
Singular value thresholding,~weighted low-rank approximation,~alternating direction method,~robust principal component analysis,~background estimation.
\end{keywords}

\section{Introduction}

The classical principal component analysis~(PCA)~problem~\citep{svd,pca} can be defined as a best approximation to a given matrix $X\in R^{m\times n}$ by a rank $r$ matrix under the Frobenius norm as follows:
\begin{eqnarray}\label{pca}
\hat{B}=\displaystyle{\arg\min_{\substack{B\\{\rm r}(B)\leq r}}\|X-B\|_F},
\end{eqnarray}
\noindent where ${\rm r}(B)$ denotes the rank of the matrix $B$. If $U\Sigma V^T$ is a singular value decomposition~(SVD) of $X$, then solutions to this problem are given by thresholding on the singular values of $X
$: $\hat{B}=U{\mathbf H}_r(\Sigma)V^T,$
where ${\mathbf H}_r$ is the hard-thresholding operator that keeps the $r$ largest singular values and replaces the others by $0$.

As it turns out, the nuclear norm $\|B\|_*$, the sum of the singular values of $B$, is a good substitution for ${\rm r}(B)$ in many problems~\citep{candesplan}.~\citet{caicandesshen}, used this to propose the following unconstrained convex optimization formulation of a low rank approximation problem:
\begin{equation}\label{svt}
\hat{B}=\arg\min_B\{\frac{1}{2}\|X-B\|_F^2+\tau \|B\|_*\}.
\end{equation}
The solution to this problem can be explicitly given using the SVDs of $X=U\Sigma V^T$ by
$
\hat{B}=U{\mathbf S}_{\tau}(\Sigma)V^T,
$
where ${\mathbf S}_{\tau}(\cdot)$ is the element-wise application of the soft-thresholding operator defined as $ {\mathbf S}_{\tau}(x)={\rm sign}(x)(|x|-\tau)_+.$~\citet{caicandesshen} referred this as the {\it singular value thresholding} (SVT) method.

It is a well-known fact that the solution to the classical PCA problem is numerically sensitive to the presence of outliers in the data matrix~\citep{LinChenMa,APG,candeslimawright}.~In other words, if the matrix $X$ is perturbed by one large value, 
the explicit formula for its low rank approximation would yield a much different solution than the un-perturbed one.~On the other hand,~$\ell_1$~norm does encourage sparsity when the norm is made small.~To solve the problem of separating the sparse outliers added to a low-rank matrix,~\citet{candeslimawright}~replaced the Frobenius norm in the SVT problem by the $\ell_1$~norm~and introduced the {\it Robust PCA} (RPCA) method~(see also~\citet{LinChenMa}):
\begin{equation}\label{rpca}
	\min_B\{\|X-B\|_{\ell_1}+\lambda \|B\|_*\}.
\end{equation}
Unlike in the classical PCA and SVT problems, the RPCA problem has no closed form solution.~Various numerical procedures have been proposed to solve it.~\citet{LinChenMa}, 
proposed two iterative methods:~exact and inexact Augmented Lagrange Method (EALM and iEALM).~The iEALM method turns out to be equivalent to the alternating direction method (ADM) later proposed by \citet{tao-yuan}.~\citet{APG} proposed the accelerated proximal gradient~(APG) method to solve the RPCA problems numerically as well.~Note that in all the numerical procedures for solving RPCA, the solution to the SVT problem is used as an important auxiliary step. 

In this paper, we propose an alternative solution to the sensitivity of PCAs to the outliers by simply introducing a weight matrix in the Frobenius norm~in~(\ref{svt}).~In particular,~using a non-singular weight matrix $W\in \mathbb{R}^{n\times n}$, we focus on the following weighted singular value thresholding~(WSVT) problem:
\begin{eqnarray}\label{weighted svt}
\min_B\{\frac{1}{2}\|(X-B)W\|_F^2+\tau \|B\|_*\},
\end{eqnarray}
where the weight matrix $W$ is user provided or automatically inferred from the data.~Our experiments suggest, a properly inferred weight matrix $W$ may eliminate the effect of the outliers in the data $X$, sharing the similar spirit as the $\ell_1$ norm employed in RPCA.~We develop a numerical algorithm to solve WSVT, present the analysis about its convergence, and also conduct some experiments to compare it with RPCA.

We note that using weighted Frobenius norm is not new in low rank matrix approximation problems.~In 2003,~\citet{srebro},~studied the following weighted low-rank approximation problem:~for a given matrix $X\in\mathbb{R}^{m\times n}$ and $r\le\min\{m,n\}$ find
\begin{align}\label{srebro_jakola}
\min_{\substack{{B}\in\mathbb{R}^{m\times n}\\{\rm r}({B})\le r}}\|({X}-B)\odot \tilde{W}\|_{F}^2,
\end{align}
where $\tilde{W}\in\mathbb{R}^{m\times n}$ is a non-negetive weight matrix and $\odot$ denotes the element-wise matrix multiplication. They pointed out that, in general, there is no closed form solution to~(\ref{srebro_jakola}).~At about the same time,~\citet{manton} proposed a problem with a generalized norm:
\begin{align}\label{manton}
\min_{\substack{{B}\in\mathbb{R}^{m\times n}\\{\rm r}({B})\le r}}\|X-B\|_{Q}^2,
\end{align}
where $Q \in \mathbb{R}^{mn\times mn}$ is a symmetric positive definite weight matrix,~$\|X-B\|_Q^2:={\rm vec}(X-B)^TQ{\rm vec}(X-B)$, and ${\rm vec}(\cdot)$ is an operator which maps the entries of $\mathbb{R}^{m\times n}$ to vectors in $\mathbb{R}^{mn\times 1}$ by stacking the columns.


The weighted Frobenius norm used in part of the WSVT problem~(\ref{weighted svt}) is a special case of both~(\ref{srebro_jakola})~and~(\ref{manton}). Note that both~(\ref{srebro_jakola})~and~(\ref{manton}) are constrained problems while~(\ref{weighted svt}) is an unconstrained problem. Due to the special structure of the WSVT problem, the numerical procedures for solving~(\ref{srebro_jakola})~and~(\ref{manton}) proposed in the literature~\citep{srebro, wibergjapan, wiberg, srebromaxmatrix, Buchanan, erikssonhengel, markovosky,markovosky1,markovosky3,markovosky4} cannot be directly applied to solving it. Moreover, we believe WSVT is worth studying as a standalone problem for the following reasons. One is that it serves as a natural alternative to RPCAs in many applications and is computationally inexpensive. The other is that the special structure of the objective function in (\ref{weighted svt}) allows us to present a detailed convergence analysis of the numerical algorithm which is usually hard to obtain in the algorithms for solving (\ref{srebro_jakola})~and~(\ref{manton})~\citep{srebro,manton,markovosky,wiberg,wibergjapan}.

To this end, one of our contributions in this paper is a numerical algorithm to solve~WSVT with theoretical convergence analysis.~An extended study of~(\ref{weighted svt}), when the nuclear norm is replaced by ${\rm r}(B)$ and the regular matrix multiplication with the weight matrix is replaced by more general pointwise multiplication, can be found in~\citet{duttali}.

The low rank approximation technique has been used to background estimation from the video sequences.~In 1999,~\citet{oliver} proposed that if the camera motion is presumably static then the background image sequence can be modeled as a low-dimensional linear subspace.~Therefore, the foreground layer which is relatively sparse comparing to the slowly changing background layer can be modeled as a sparse ``outlier'' component of the video sequence. In short, if each frame of the sequence is vectorized and arranged as columns of $X$, then $B$, the low-rank matrix is assumed to capture the background information considering $X-B$ sufficiently sparse.~In the past decade, a key application of RPCA problems is in background estimation from video sequences.~Thus a solution to~(\ref{rpca}) would give a reasonable estimation of the background frames from a video sequence.~On the other hand,~the SVT in~(\ref{svt}) fails to provide a comparable background estimation~(see also Section 4 for some numerical results).~We use this experimental setup to test our algorithm as well as the algorithms for RPCA.~For a thorough review of the most recent and traditional algorithms for solving background estimation problem, we refer the reader to~\cite{Bouwmans201431},~\citet{Sobral20144}, and~\citet{Bouwmans2016}.

The organization of the rest of the paper is as follows.~In Section 2, we propose a numerical solution to our WSVT problem for a general non-singular weight matrix $W$.~In Section 3, we present a detailed convergence analysis of our proposed numerical algorithm. In Section 4, using WSVT we propose a robust background estimation model and compare its performance with the RPCA algorithms.


\section{Solving the WSVT problem}

We propose a numerical algorithm to solve the WSVT problem~(\ref{weighted svt}) when $W$ is non-singular. The novelty of our WSVT algorithm is that by using auxiliary variables, we can employ the simple and fast alternating direction method~(ADM). Since $W$ is non-singular, we re-write~(\ref{weighted svt}) as:
\begin{eqnarray*}
	\min_C\{\frac{1}{2}\|XW-C\|_F^2+\tau \|CW^{-1}\|_*\}.
\end{eqnarray*}
Introduce a new variable $D$ with equality constraints as follows: $D=CW^{-1}$.~Then above problem becomes
\begin{eqnarray}
\min_{C,D\atop{D=CW^{-1}}}\{\frac{1}{2}\|XW-C\|_F^2+\tau \|D\|_*\}.
\end{eqnarray}
Next, we use the augmented Lagrange multiplier~(ALM) method~\citep{LinChenMa, sboyd} to solve this minimization problem.~Let
\begin{eqnarray*}
	L(C,D,Y,\mu)&=&\frac{1}{2}\|XW-C\|_F^2+\tau \|D\|_*+\langle Y,D-CW^{-1}\rangle+\frac{\mu}{2}\|D-CW^{-1}\|^2_F
\end{eqnarray*}
be the augmented Lagrangian function where $Y\in R^{m\times n}$ is the Lagrange multiplier and $\mu>0$, a balancing parameter.~To find a numerical solution to the minimization problem $\displaystyle{\min_{C,D} L(C,D,Y,\mu)}$, we use the alternating direction method via the following iterative updating scheme: \begin{eqnarray*}
	& C_{k+1}=\displaystyle{\arg\min_C L(C,D_k,Y_k,\mu_k)},\\
	& D_{k+1}=\displaystyle{\arg\min_D L(C_{k+1},D,Y_k,\mu_k)}.
\end{eqnarray*}
Note that, by completing the squares and keeping only the relevant terms in the augmented Lagrangian, we have
 \begin{eqnarray*}
 	& \displaystyle{\arg\min_C L(C,D_k,Y_k,\mu_k)}=\displaystyle{\arg\min_C\{\frac{1}{2}\|XW-C\|_F^2+\frac{\mu_k}{2}\|D_k-CW^{-1}+\frac{1}{\mu_k}Y_k\|_F^2\}},\\
 	& \displaystyle{\arg\min_D L(C_{k+1},D,Y_k,\mu_k)}=\displaystyle{\arg\min_D\{\tau\|D\|_*+\frac{\mu_k}{2}\|D-C_{k+1}W^{-1}+\frac{1}{\mu_k}Y_k\|_F^2\}}.
 \end{eqnarray*}
The solution to the first-subproblem can be derived by setting the gradient with respect to $C$ to $0$:
\begin{align*}
C_{k+1} = (XW + \mu_k{D_k} (W^{-1})^T+Y_k(W^{-1})^T)(I + \mu_k(W^TW)^{-1})^{-1};
\end{align*}
and, the second-subproblem is a SVT problem~(\ref{svt}) with $B=C_{k+1}W^{-1}-\frac{1}{\mu_k}Y_k$ and so its solution is
\begin{equation*}
D_{k+1}=U_kS_{\tau/\mu_k}(\Sigma_k)V_k^T,
\end{equation*}
where $U_k\Sigma_kV_k^T$ is a SVD of $(C_{k+1}W^{-1}-\frac{1}{\mu_k}Y_k)$.
We update $Y_k$ and $\mu_k$ by 
\begin{equation*}
Y_{k+1}=Y_k+\mu_k (D_{k+1}-C_{k+1}W^{-1}); ~\mu_{k+1}=\rho\mu_k,
\end{equation*}
for a fixed $\rho>1$. Algorithm~\ref{alg_1} presents the full numerical procedure.
\begin{algorithm}
	\SetAlgoLined
	\SetKwInOut{Input}{Input}
	\SetKwInOut{Output}{Output}
     \SetKwInOut{Init}{Initialize}
	\nl\Input{Data matrix $X \in \mathbb{R}^{m \times n}$, weight matrix $W \in \mathbb{R}_{+}^{n \times n}$ and $\tau >0, \rho>1$\;}
     \nl\Init {$C = XW,D = X,Y = 0,\mu >0$\;}
	\BlankLine
	\nl \While{not converged}
	{
		\nl $C = (XW + \mu{D} (W^T)^{-1}+Y(W^{-1})^T)(I_n + \mu(W^TW)^{-1})^{-1}$\;
		\nl $[U\;\;\Sigma\;\;V] = SVD(CW^{-1} -\frac{1}{\mu}Y)$\;
		\nl $D = US_{\frac{\tau}{\mu}}(\Sigma)V^T$\;
		\nl $Y = Y+\mu(D-CW^{-1})$\;
		\nl $\mu = \rho\mu$\;
	}
	\BlankLine
	\nl \Output{$B= CW^{-1}$}
	\caption{WSVT algorithm} \label{alg_1}
\end{algorithm}

\section{Convergence analysis}

{I}{n} this section we establish the convergence of our algorithm by following the main ideas of~\citet{LinChenMa} and~\citet{LiOreifej}.~Recall that $Y_{k+1}= Y_k+\mu_k(D_{k+1}-C_{k+1}W^{-1})$ and define ${\hat{Y}_{k+1}}:=Y_k+\mu_k(D_{k}-C_{k+1}W^{-1})$. We first state our main results. 
\begin{theorem}\label{theorem_3_3}
	Withe the notations introduced above, the following hold. 
	\begin{description}
		\item{(i)} The sequences $\{C_k\}$ and $\{D_k\}$ are convergent. Moreover,  $$
		\|D_k-C_kW^{-1}\|\le \frac{C}{\mu_k},~~k = 1,2,\cdots,$$ for some constant $C$ independent of $k$.
		\item{(ii)} If $L_{k+1} := L(C_{k+1},D_{k+1},Y_k,\mu_k)$, then the sequence $\{L_k\}$ is bounded above and 
		\begin{align*}
		& L_{k+1}-L_k \le \frac{\mu_k +\mu_{k-1}}{2}\|D_k-C_kW^{-1}\|_F^2=O(\frac{1}{\mu_k}),
		\;\;\text{for}\;\;k = 1,2,\cdots.
		\end{align*}
	\end{description}
\end{theorem}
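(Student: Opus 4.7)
The plan is to follow the augmented-Lagrangian convergence blueprint of \citet{LinChenMa}, suitably adapted to the WSVT structure. The foundational step is a uniform bound on the multiplier sequence $\{Y_k\}$. Writing the first-order optimality condition for the $D$-subproblem and inserting the update $Y_{k+1} = Y_k + \mu_k(D_{k+1}-C_{k+1}W^{-1})$ yields the inclusion $-Y_{k+1} \in \tau\,\partial\|D_{k+1}\|_*$. Since every element of the nuclear-norm subdifferential has spectral norm at most $1$, this forces $\{Y_k\}$ to be uniformly bounded in Frobenius norm. The quantitative bound $\|D_k-C_kW^{-1}\|_F \le C/\mu_k$ in (i) then follows immediately by rearranging the multiplier update into $\|D_{k+1}-C_{k+1}W^{-1}\|_F = \|Y_{k+1}-Y_k\|_F/\mu_k$.

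I would next establish (ii). The key algebraic identity is a three-term telescoping of $L_{k+1}-L_k$ obtained by swapping one of $C$, $D$, $(Y,\mu)$ at a time, namely $L_{k+1}-L_k = [L(C_{k+1},D_{k+1},Y_k,\mu_k)-L(C_{k+1},D_k,Y_k,\mu_k)] + [L(C_{k+1},D_k,Y_k,\mu_k)-L(C_k,D_k,Y_k,\mu_k)] + [L(C_k,D_k,Y_k,\mu_k)-L(C_k,D_k,Y_{k-1},\mu_{k-1})]$. The first two brackets are non-positive because $D_{k+1}$ and $C_{k+1}$ minimize the respective subproblems. For the third bracket, a direct expansion combined with the previous-step multiplier identity $Y_k-Y_{k-1}=\mu_{k-1}(D_k-C_kW^{-1})$ collapses the difference to exactly $\tfrac{\mu_k+\mu_{k-1}}{2}\|D_k-C_kW^{-1}\|_F^2$; substituting the bound from (i) gives the advertised $O(1/\mu_k)$ rate. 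Since $\mu_k = \rho^{k-1}\mu_1$ grows geometrically, the series $\sum 1/\mu_k$ converges, and the telescoped inequality $L_k \le L_1 + \sum_{j<k}(L_{j+1}-L_j)$ shows $\{L_k\}$ bounded above.

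Returning to the convergence part of (i), the upper bound on $\{L_k\}$ together with the observation that the coupling terms $\langle Y_k,D_{k+1}-C_{k+1}W^{-1}\rangle$ and $\tfrac{\mu_k}{2}\|D_{k+1}-C_{k+1}W^{-1}\|_F^2$ are each $O(1/\mu_k)$ forces $\tfrac12\|XW-C_{k+1}\|_F^2 + \tau\|D_{k+1}\|_*$ to be bounded, so $\{C_k\}$ and $\{D_k\}$ are bounded and hence precompact. To upgrade this to genuine sequential convergence, my plan is to sharpen the two subproblem optimality inequalities using strong convexity (with moduli $1$ and $\mu_k$, respectively) to obtain $\tfrac12\|C_{k+1}-C_k\|_F^2 + \tfrac{\mu_k}{2}\|D_{k+1}-D_k\|_F^2 \le L_k - L_{k+1} + O(1/\mu_k)$; telescoping then yields $\sum_k\|C_{k+1}-C_k\|_F^2 < \infty$ and $\sum_k \mu_k\|D_{k+1}-D_k\|_F^2 < \infty$. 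Any cluster point $(C^*,D^*)$ satisfies $D^*=C^*W^{-1}$ (from the $O(1/\mu_k)$ bound), and passing to the limit in the two subproblem optimality conditions identifies $D^*$ as a minimizer of the WSVT problem~(\ref{weighted svt}); since $W$ is non-singular the WSVT objective is strictly convex, so its minimizer is unique and the full sequences must converge to this common limit. I expect the most delicate step to be this final passage to the limit, specifically controlling the auxiliary multiplier $\hat{Y}_{k+1} = Y_{k+1} + \mu_k(D_k - D_{k+1})$ arising from the $C$-subproblem equation $(C_{k+1}-XW)W^T = \hat{Y}_{k+1}$; taming the term $\mu_k(D_k-D_{k+1})$ along a suitable subsequence is precisely what the stronger $\mu_k$-weighted summability estimate above is designed to enable.
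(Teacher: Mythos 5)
Your handling of the boundedness of $\{Y_k\}$, the quantitative bound $\|D_k-C_kW^{-1}\|_F\le C/\mu_k$, and all of part~(ii) coincides with the paper's argument: the same subdifferential inclusion $-Y_{k+1}\in\tau\partial\|D_{k+1}\|_*$, the same rearrangement of the multiplier update, and the same telescoping (the paper writes your first two brackets as the chain $L_{k+1}\le L(C_{k+1},D_k,Y_k,\mu_k)\le L(C_k,D_k,Y_k,\mu_k)$ and computes the third to be exactly $\tfrac{\mu_k+\mu_{k-1}}{2}\|D_k-C_kW^{-1}\|_F^2$). The genuine problem is your route to the convergence of $\{C_k\}$ and $\{D_k\}$. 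Square-summability $\sum_k\|C_{k+1}-C_k\|_F^2<\infty$ does not imply convergence (take scalar $c_k=\sum_{j\le k}1/j$), so your whole argument rests on showing that every cluster point is the unique WSVT minimizer. That identification requires the two multiplier sequences to share a limit, i.e.\ $Y_{k+1}-\hat Y_{k+1}=\mu_k(D_{k+1}-D_k)\to 0$ at least along a subsequence. Your $\mu_k$-weighted summability $\sum_k\mu_k\|D_{k+1}-D_k\|_F^2<\infty$ cannot deliver this: if $\mu_k\|D_{k+1}-D_k\|_F^2=1/\mu_k$ the sum converges (since $\mu_k=\rho^k\mu_0$), yet $\mu_k\|D_{k+1}-D_k\|_F\equiv 1$ for all $k$. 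Indeed, because $\sum_k\mu_k^{-1}<\infty$ here, this is precisely the regime in which the Lin--Chen--Ma type analysis does \emph{not} guarantee convergence to the optimum; the paper is careful in Theorem~\ref{theorem_3_4} to compare only against $f_\infty$ evaluated at the limit point itself, never claiming that limit is the minimizer of~(\ref{weighted svt}). So the final step of your plan is not merely delicate --- it is asserting something stronger than what the algorithm provably achieves.

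The fix is much closer to hand than you realize, and it is what the paper does. You already note the $C$-subproblem identity $(C_{k+1}-XW)W^{T}=\hat Y_{k+1}$ and you already obtain boundedness of $\{C_k\}$ (your route via the upper bound on $L_k$ and the $O(1/\mu_k)$ coupling terms is a legitimate alternative to the paper's Lemma~\ref{theorem_3_2}(i), which instead derives the recursion $\|C_{k+1}\|_F\le(1+\lambda/\mu_k)^{-1}\|C_k\|_F+C/\mu_k$ from the optimality equation). Combining these two facts gives boundedness of $\{\hat Y_k\}$ for free, whence $\|D_{k+1}-D_k\|=\|Y_{k+1}-\hat Y_{k+1}\|/\mu_k=O(1/\mu_k)$. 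Since $\sum_k 1/\mu_k<\infty$, the increments are absolutely summable and $\{D_k\}$ is Cauchy --- no cluster-point or optimality argument needed. Convergence of $\{C_k\}$ then follows from its closed-form update $C_{k+1}=(XW+\mu_kD_k(W^{-1})^T+Y_k(W^{-1})^T)(I+\mu_k(W^TW)^{-1})^{-1}$. I recommend replacing your strong-convexity/cluster-point endgame with this argument and dropping the claim that the limit solves~(\ref{weighted svt}).
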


\begin{theorem}\label{theorem_3_4}
	Let $(C_{\infty},D_{\infty})$ be the limit point of $(C_k,D_k)$ and define $$f_{\infty} =  \frac{1}{2}\|XW-C_{\infty}\|_F^2 +\tau\|D_{\infty}\|_\ast.$$ Then $C_{\infty}=D_{\infty}W$ and
	$$-O(\mu_{k-1}^{-2})\le \frac{1}{2}\|XW-C_k\|_F^2 +\tau\|D_k\|_\ast-f_{\infty} \le O(\mu_{k-1}^{-1}).$$
\end{theorem}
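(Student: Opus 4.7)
First, I would note that $C_\infty = D_\infty W$ follows immediately from Theorem~\ref{theorem_3_3}(i): since $\|D_k - C_kW^{-1}\|_F \le C/\mu_k \to 0$ and $\{C_k\},\{D_k\}$ are convergent, passing to the limit yields $D_\infty = C_\infty W^{-1}$.

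For the two-sided rate, the plan is to bridge $f_k := \tfrac{1}{2}\|XW - C_k\|_F^2 + \tau\|D_k\|_\ast$ and $L_k$. Using the dual update $D_k - C_kW^{-1} = (Y_k - Y_{k-1})/\mu_{k-1}$ to rewrite the Lagrangian penalty terms of $L_k$ in terms of the squared multiplier norms, I would derive the clean identity
\[ f_k - L_k = \frac{1}{2\mu_{k-1}}\bigl(\|Y_k\|_F^2 - \|Y_{k-1}\|_F^2\bigr). \]
Since the limit is feasible, $L_\infty = f_\infty$, so
\[ f_k - f_\infty = (L_k - L_\infty) - \frac{1}{2\mu_{k-1}}\bigl(\|Y_k\|_F^2 - \|Y_{k-1}\|_F^2\bigr), \]
and both claimed bounds will follow by controlling the two terms on the right, using boundedness of $\{Y_k\}$ (a by-product of the convergence analysis underlying Theorem~\ref{theorem_3_3}).

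For the upper bound, I would telescope part (ii) of Theorem~\ref{theorem_3_3}: since $L_{j+1}-L_j \le O(\mu_j^{-1})$ and the $\mu_j$ grow geometrically, $L_\infty - L_k \le \sum_{j\ge k} O(\mu_j^{-1}) = O(\mu_{k-1}^{-1})$; combined with the correction term, which is $O(\mu_{k-1}^{-1})$ in absolute value because $\{Y_k\}$ is bounded, this yields $f_k - f_\infty \le O(\mu_{k-1}^{-1})$.

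For the sharper lower bound, I would invoke the first-order optimality of the two subproblems: the $D$-step gives $-Y_k \in \tau\,\partial\|D_k\|_\ast$, and the $C$-step gives $C_k - XW = \hat Y_k(W^{-1})^T$. Writing the convexity inequalities of the two pieces of the objective with the feasible comparison point $(C_\infty,D_\infty)$ and using $D_\infty = C_\infty W^{-1}$ to collapse the cross-terms, I would arrive at an inequality of the form $f_k - f_\infty \ge -\langle Y_\infty,\, D_k - C_kW^{-1}\rangle + (\text{lower order})$. The crude estimate $\|D_k - C_kW^{-1}\|_F \le C/\mu_k$ only produces $-O(\mu_{k-1}^{-1})$; the sharper $-O(\mu_{k-1}^{-2})$ rate has to be extracted by rewriting $D_k - C_kW^{-1} = (Y_k-Y_{k-1})/\mu_{k-1}$ and pulling an extra factor $\mu_{k-1}^{-1}$ out of $\langle Y_\infty,\, Y_k - Y_{k-1}\rangle$, using monotonicity of $\partial\|\cdot\|_\ast$ applied to the pair $(-Y_k,-Y_\infty)$ together with the rate at which $Y_k$ approaches $Y_\infty$. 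I expect this refined dual-variable estimate to be the main obstacle; the surrounding manipulations with $L_k$ are routine.
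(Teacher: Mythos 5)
Your treatment of $C_\infty = D_\infty W$ and the bridge identity between $f_k := \tfrac12\|XW-C_k\|_F^2+\tau\|D_k\|_*$ and $L_k$ matches the paper's opening moves (up to a sign: the correction is $\tfrac{1}{2\mu_{k-1}}(\|Y_{k-1}\|_F^2-\|Y_k\|_F^2)$, not its negative, which is harmless since only its magnitude is used). The two rate bounds, however, both have real gaps. For the upper bound, telescoping $L_{j+1}-L_j\le O(\mu_j^{-1})$ gives $L_\infty - L_k \le O(\mu_k^{-1})$, i.e.\ a \emph{lower} bound $L_k \ge L_\infty - O(\mu_k^{-1})$; it does not control $L_k - L_\infty$ from above, which is what $f_k - f_\infty \le O(\mu_{k-1}^{-1})$ requires. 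Theorem~\ref{theorem_3_3}(ii) is one-sided and $\{L_k\}$ is not shown to be monotone, so this direction is simply unavailable by telescoping. The paper's upper bound instead comes from comparing with the feasible limit point: $(C_k,D_k)$ minimizes $L(\cdot,\cdot,Y_{k-1},\mu_{k-1})$, this minimum is at most the minimum over the feasible set $CW^{-1}=D$, and at $(C_\infty,D_\infty)$ the multiplier and penalty terms vanish, giving $L(C_k,D_k,Y_{k-1},\mu_{k-1})\le f_\infty$ directly. That comparison is the missing ingredient in your argument.

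For the lower bound, your plan via monotonicity of $\partial\|\cdot\|_*$ needs a convergence rate for $Y_k\to Y_\infty$, but only boundedness of $\{Y_k\}$ is available: $\|Y_{k+1}-Y_k\|=\mu_k\|D_{k+1}-C_{k+1}W^{-1}\|=O(1)$, so $\{Y_k\}$ need not even converge, and you yourself flag this step as the obstacle you cannot close. The paper's route is far more elementary and avoids dual variables entirely: replace $C_k$ by the feasible point $D_kW$. Since $\|C_k-D_kW\|_F=\|(D_k-C_kW^{-1})W\|_F=\tfrac{1}{\mu_{k-1}}\|(Y_{k-1}-Y_k)W\|_F=O(\mu_{k-1}^{-1})$, the perturbation of the quadratic term is $O(\mu_{k-1}^{-2})$ --- the squared rate falls out of the residual entering the Frobenius term quadratically --- while $\tfrac12\|XW-D_kW\|_F^2+\tau\|D_k\|_*\ge f_\infty$ because $f_\infty$ is the optimal value over the feasible set. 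Without either the feasible-point comparison (for the upper bound) or this perturbation argument (for the sharper lower bound), the proposal does not establish the stated two-sided estimate.
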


We provide the proofs of Theorems~\ref{theorem_3_3} and \ref{theorem_3_4} in Appendix A. 
\section{Experimental results}

In this section, as a proof of concept, we demonstrate the performance of our WSVT algorithm on two computer vision applications: background estimation from video sequences and shadow removal from face images under varying illumination conditions.~We show that, with a diagonal weight matrix $W$, we can improve the performance or achieve similar results as compared with other state-of-the-art unweighted low-rank algorithms, especially RPCAs.

\subsection{Background estimation}
Background estimation from video sequences is a classic computer vision problem.~One can consider the scene in the background is presumably static; thus, the background component is expected to be the low-rank part of the matrix $X$ that concatenates the video frames. Minimizing the rank of the matrix $X$ emphasizes the structure of the linear subspace containing the column space of the background.~However, the exact desired rank is often tuned empirically, as the ideal rank-one background is often unrealistic. 

In our experiments, we use three different sequences:~(i)~the Stuttgart synthetic video data set~\citep{cvpr11brutzer}, (ii)~the airport hall sequence, and~(iii)~the fountain sequence~from the PTIS dataset~\citep{PTIS}. We give qualitative analysis results on all three sequences. For performing the quantitative analysis between different methods, we use the Stuttgart video sequence. It is a computer generated sequence from the vantage point of a static camera located on the side of a building viewing a city intersection. The reason for choosing this sequence is two-fold.~First, this is a challenging video sequence which comprises both static and dynamic foreground objects and varying illumination in the background. Second, because of the availability of ample amount of ground truth, we can provide a rigorous quantitative comparison of the various methods. We choose the first 600 frames of the {\it Basic} scenario to capture the changing illumination and foreground object. Correspondingly, we have 600 ground truth frames.~The frames and ground truths are resized to  $64\times80$ and each is vectorized to a column vector of size $5120\times1$. Denote by the matrix as the concatenation of all the video frames, $X=\{vec(I_1),vec(I_2),\cdots,vec(I_{600})\}$, where $vec(I_i)\in\mathbb{R}^{5120\times 1}$ and $I_i\in\mathbb{R}^{64\times 80}$. Figure~\ref{video_frame} shows a sample video frame of the {\it Basic} scenario and the corresponding ground truth mask from the Stuttgart video sequence and demonstrates an outline of processing the video frames defined above. 
\begin{figure}
	\begin{minipage}{0.4\textwidth}
		\centering
		\begin{subfigure}{\textwidth}
			\includegraphics[width=\textwidth, height = 1.2in]{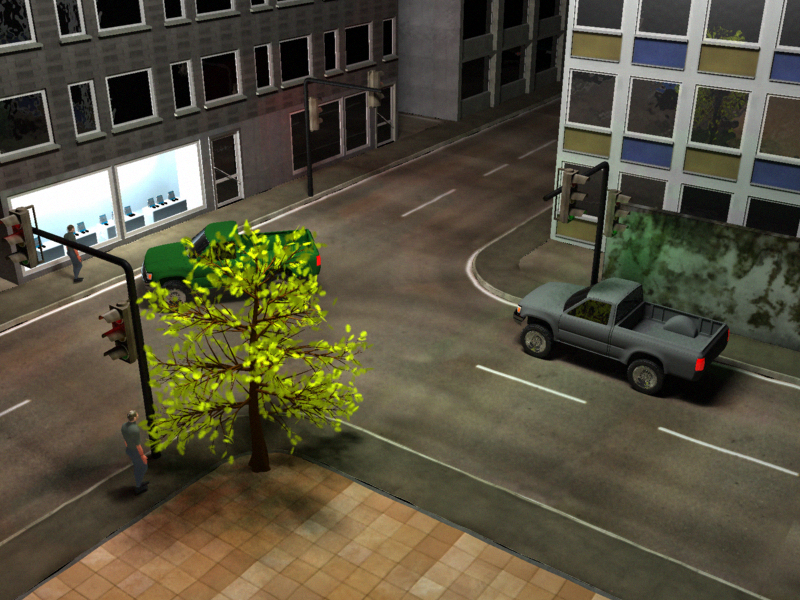}
			\caption{}\label{fig:leftA}
		\end{subfigure}
		\begin{subfigure}{\textwidth}
			\includegraphics[width=\textwidth,height = 1.2in]{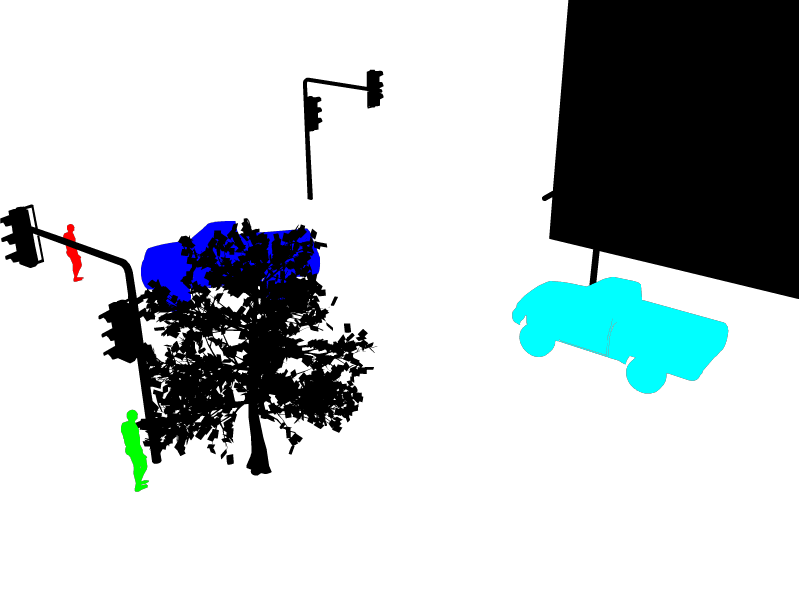}
			\caption{}\label{fig:leftB}
		\end{subfigure}
	\end{minipage}\hfill
	\begin{minipage}{0.55\textwidth}
		\centering
		\begin{subfigure}{\textwidth}
			\includegraphics[width=1.2\textwidth,height =2.5in]{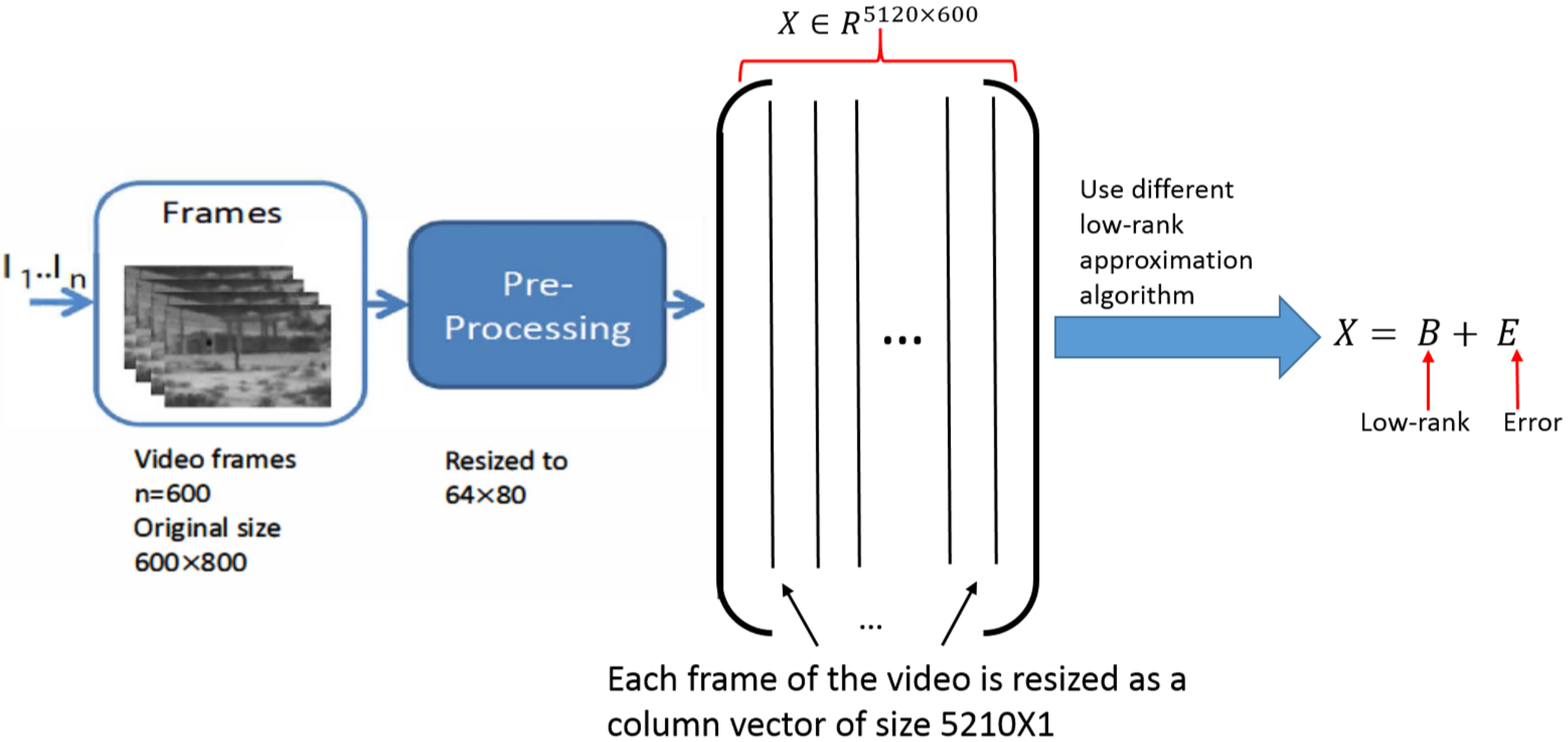}
			\caption{}\label{fig:rightA}
		\end{subfigure}
	\end{minipage}
	\caption{~(a)~Sample frame from the Stuttgart artificial video sequence and (b) the corresponding ground truth mask.~(c)~The framework for background estimation.}\label{video_frame}
\end{figure}
\begin{figure}[H]
	\centering
	\begin{subfigure}[b]{0.5\textwidth}
		\includegraphics[width=\textwidth]{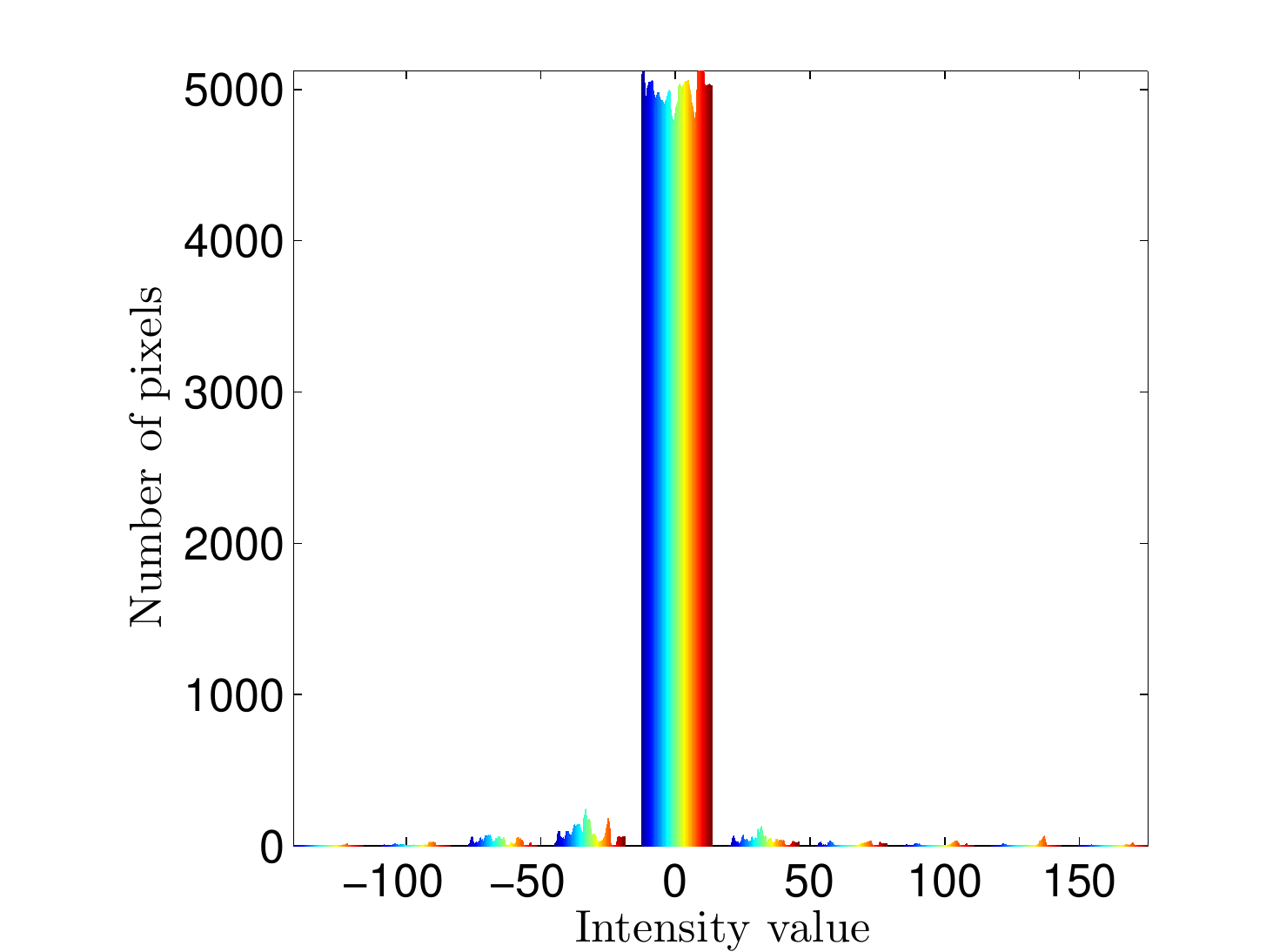}
		\caption{}
	\end{subfigure}
	\begin{subfigure}[b]{0.49\textwidth}
		\includegraphics[height=2.3in,width=1\textwidth]{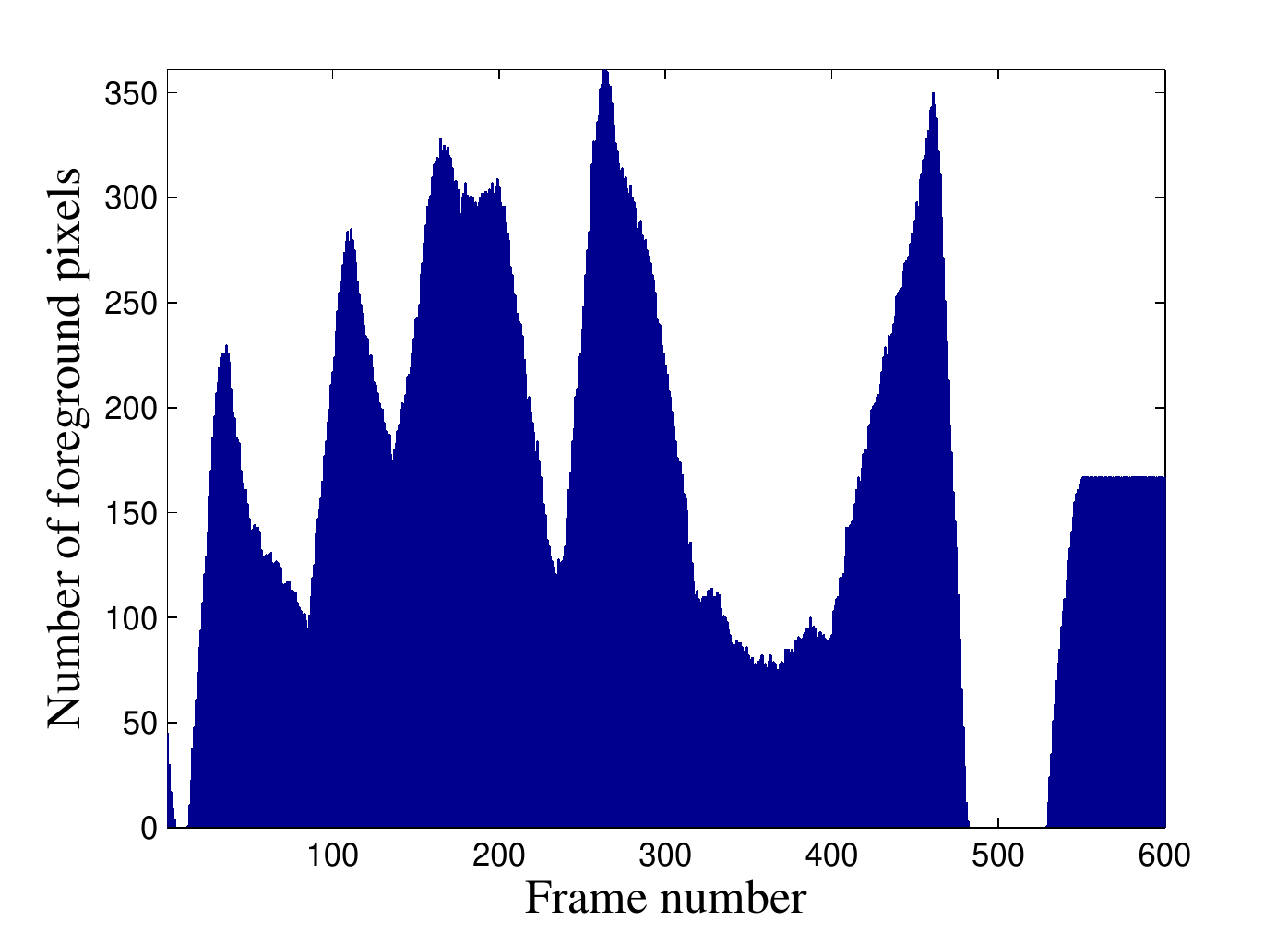}
		\caption{}
	\end{subfigure}
	\caption{Stuttgart video sequence {\it Basic} scenario:~(a)~Histogram to choose the threshold $\epsilon_1=31.2202$.~(b)~Sums of the logical values within each ground truth frame, where the frames with zero foreground pixels are purely background frames. See text for more details.}\label{gt_hist}
\end{figure}

We compare the performance of our algorithm to RPCA and SVT methods.~We set a uniform threshold $10^{-7}$ for each method. For iEALM and APG, the two prevalent algorithms for RPCA, we set $\lambda={1}/{\sqrt{{\rm max}\{m,n\}}}$, and for iEALM we choose $\mu=1.5, \rho=1.25$~\citep{APG,candeslimawright, LinChenMa}. To choose the right set of parameters for WSVT, we perform a grid search using a small holdout subset of frames. For WSVT, we set $\tau=4500$, $\mu = 5,\rho=1.1$ for a fixed weight matrix $W$. For SVT, we set $\tilde{\tau}=\tau/\mu$ since our method is equivalent to SVT for $W=I_n$.~Next, we show the effectiveness of the our WSVT and a mechanism for automatically estimating the weights from the data.

\subsubsection{Estimating the weight matrix $W$ in our WSVT}
We present a mechanism for estimating the weights from the data for WSVT. We use the heuristic that the data matrix $X$ can be comprised of two blocks $X_1$ and $X_2$ such that $X_1$ mainly contains the information about the background frames which have the least foreground movements. 
\begin{figure}
	\begin{minipage}{0.45\textwidth}
		\centering
		\begin{subfigure}{\textwidth}
			\includegraphics[width=\textwidth]{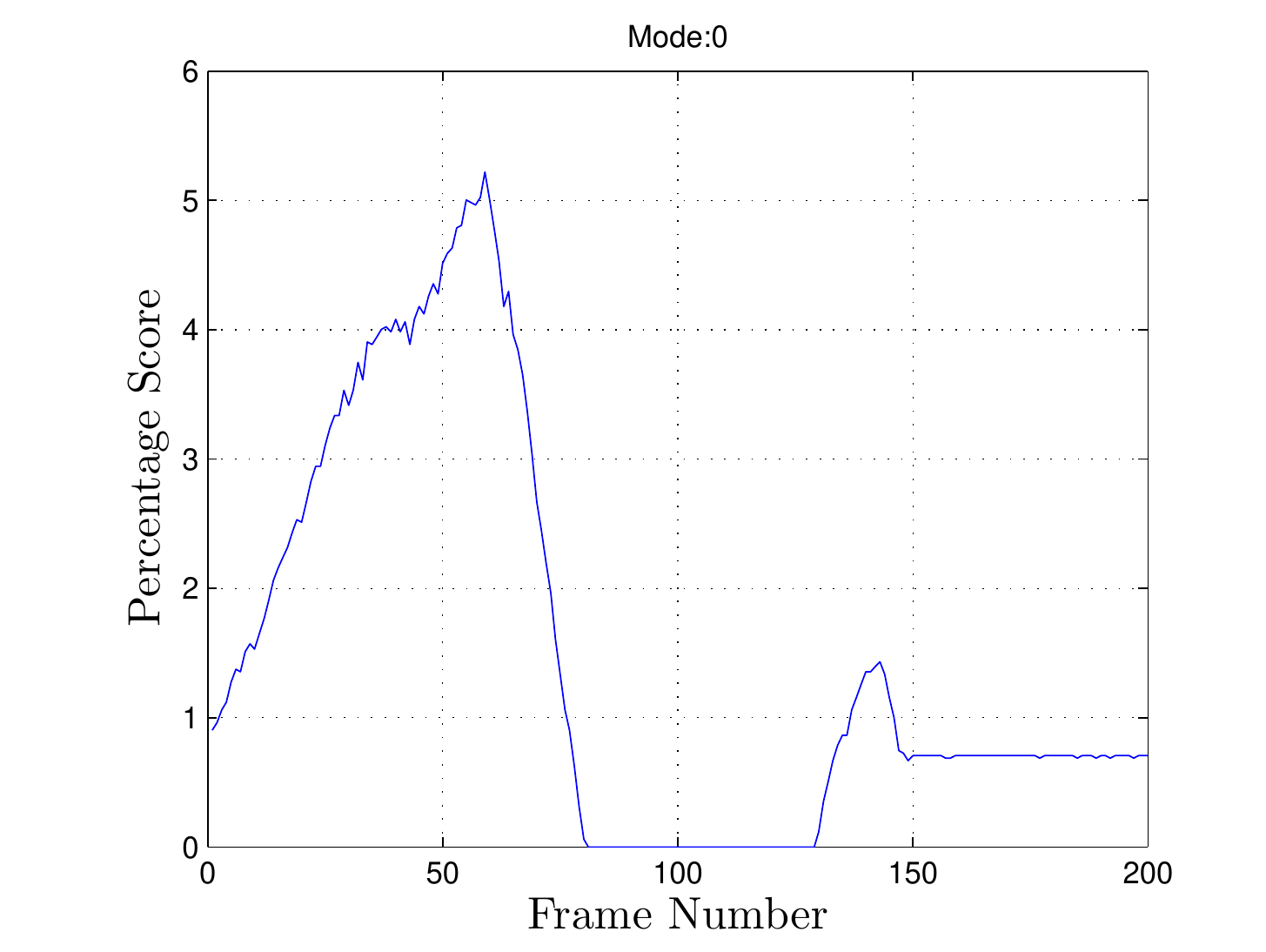}
			\caption{}\label{fig:leftA}
		\end{subfigure}
		\begin{subfigure}{\textwidth}
			\includegraphics[width=\textwidth]{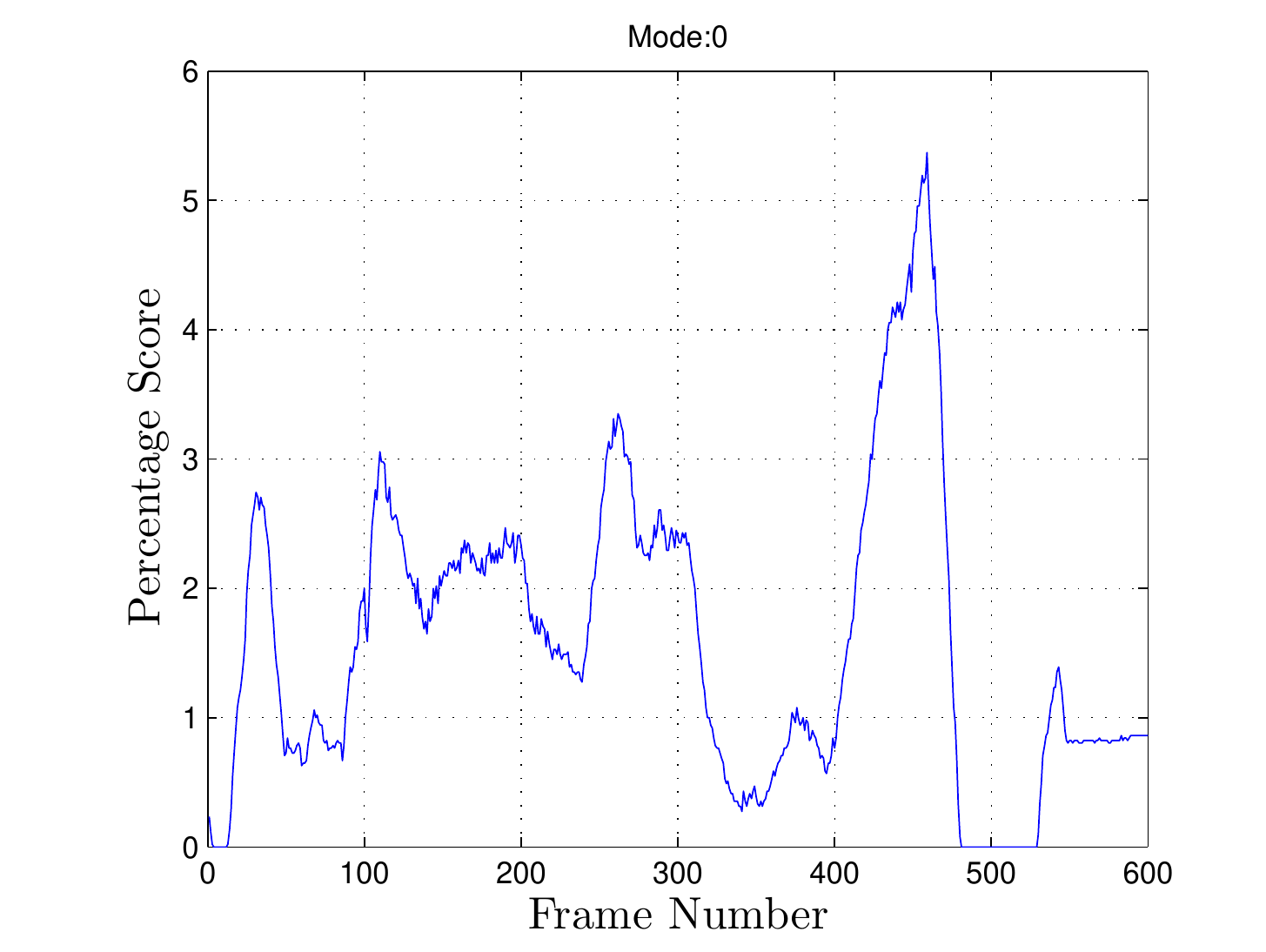}
			\caption{}\label{fig:leftB}
		\end{subfigure}
	\end{minipage}\hfill
	\begin{minipage}{0.45\textwidth}
		\centering
		\begin{subfigure}{\textwidth}
			\includegraphics[width=\textwidth]{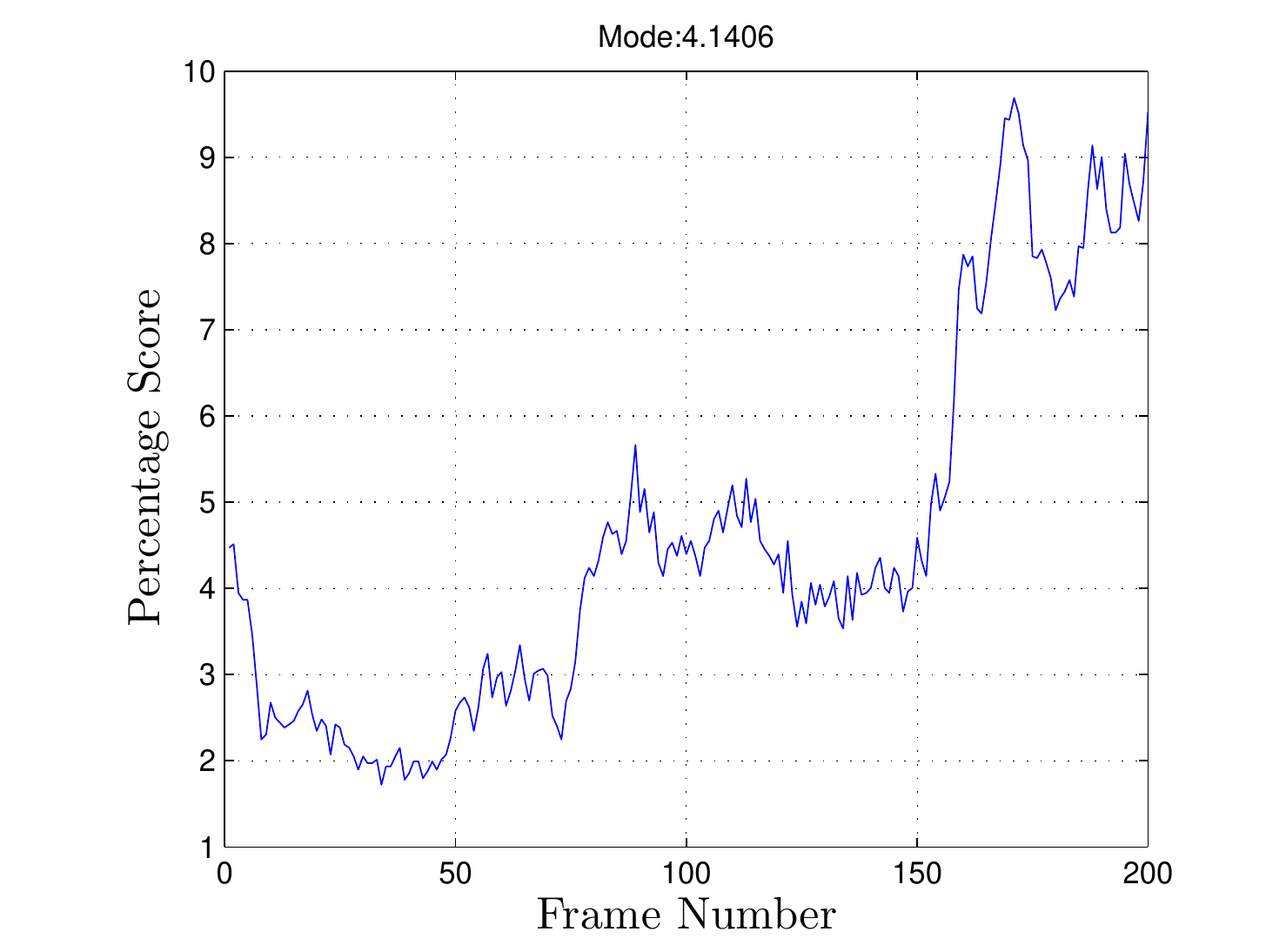}
			\caption{}\label{fig:rightA}
		\end{subfigure}
		\begin{subfigure}{\textwidth}
			\includegraphics[width=\textwidth]{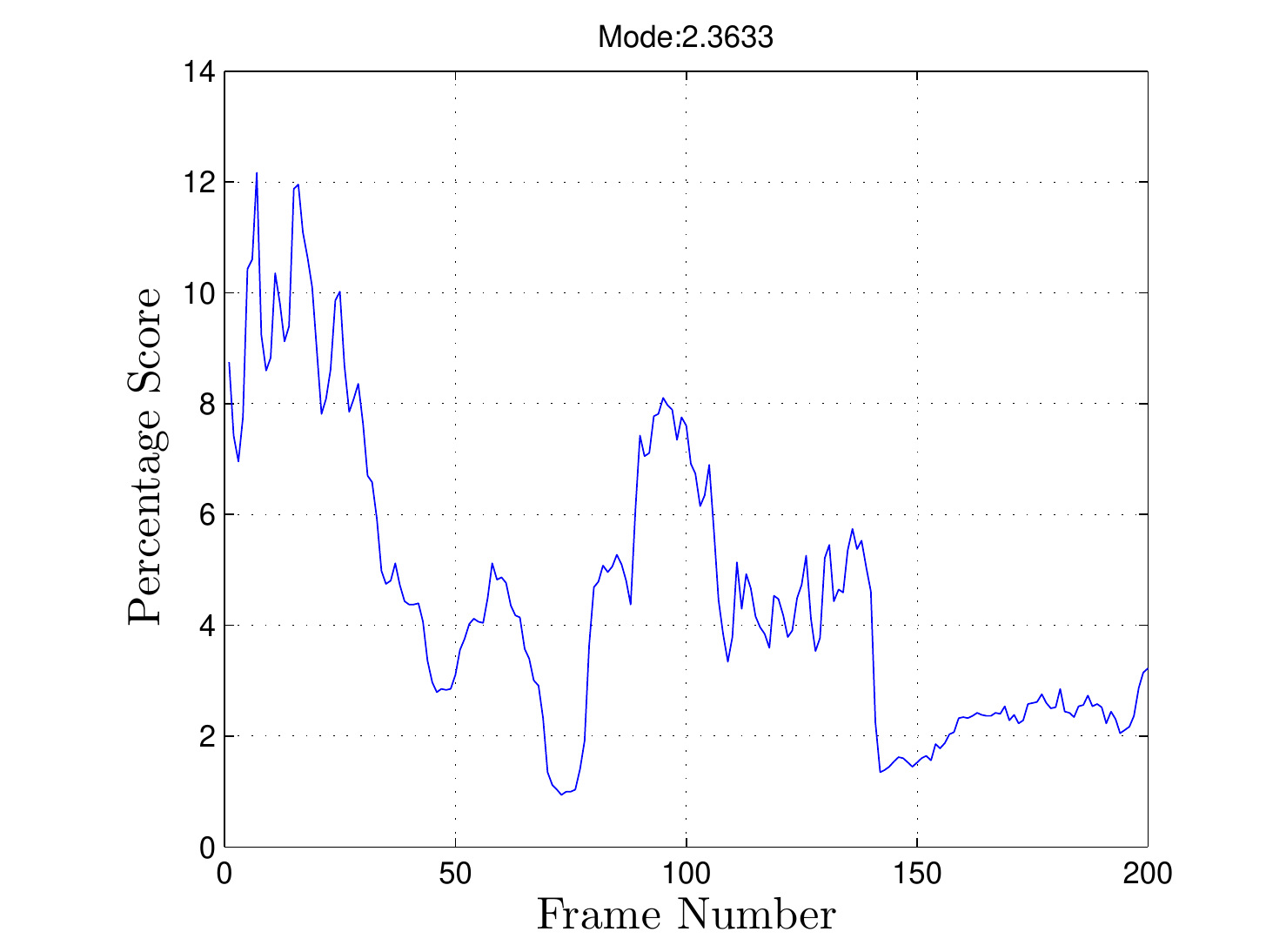}
			\caption{}
		\end{subfigure}
	\end{minipage}
	\caption{{\bf Robust weight learning.}~Frame number versus Percentage score for Stuttgart video sequence {\it Basic} scenario:~(a)~on last 200 frames, and~(b)~on the entire sequence.~Originally, there are 46 frames and 53 frames in the last 200 frames and the entire sequence respectively, with no foreground. We also show the percentage scores versus frame numbers on the first 200 frames for:~(c)~the fountain sequence, and (b)~the airport hall sequence. 
	}\label{perc_score}
\end{figure}

Therefore, we want to choose a large weight $\tilde{\lambda}$ corresponding to the frames of $X_1$. However, the changing illumination, reflection, and noise are typically also a part of those frames and pose a lot of challenges. We thus instead recover a low-rank matrix $B=(B_1\;\;B_2)$ with compatible block partition such that $B_1\approx X_1$. For this purpose, the main idea is to have a coarse estimation of the background using an identity weight matrix, infer the weights from the coarse estimation, and then use the inferred weights to refine the background.
We denote the test matrix as $T$,~and ground truth matrix as $G$. We borrow some notations from MATLAB to explain the experimental setup. Note that, the last 200 frames of the Stuttgart video sequence are the most challenging among the sequence, containing static foreground~(last 50 frames) along with moving foreground object and varying illumination.

We use our method with $W=I_n$ for 2 iterations on the frames, and then detect the initial foreground~$F_{In}$ and background $B_{In}$. It might not be the best practice to specify the number of foreground/background pixels manually for each test video sequence. Instead, we propose an automatic mechanism.~We plot the histogram of our initially detected foreground to determine the threshold $\epsilon_1$ of the intensity value. In our experiments on the Stuttgart video sequence {\it Basic} scenario, we pick $\epsilon_1=31.2202$, the second smallest value of $|(F_{In})_{ij}|$, where $|\cdot|$ denotes the absolute value~(see Figure~\ref{gt_hist}).

We replace everything below $\epsilon_1$ by 0 in $F_{In}$, and convert it into a logical matrix $LF_{In}$.~Arguably, for each such logical video frame, the number of pixels whose values are on $(+1)$ is a good indicator about whether the frame is mainly about the background.~Note that $\sum_{i=1}^m(LF_{IN})_{ij}$ is the $j$th column sum of $LF_{IN}$.~We convert $B_{In}$ directly to a logical matrix $LB_{In}$ and define percentage score as the ratio of total foreground and background pixels converted into percentage.~We calculate the percentage score of each video frame and choose the threshold $\epsilon_2$ as
$$ \epsilon_2:= {\rm mode}(\{\frac{\sum_i(LF_{IN})_{i1}}{\sum_i(LB_{IN})_{i1}}\times100,\frac{\sum_i(LF_{IN})_{i2}}{\sum_i(LB_{IN})_{i2}}\times100,\cdots, \frac{\sum_i(LF_{IN})_{in}}{\sum_i(LB_{IN})_{in}}\times100\}),
$$
where $\{\frac{\sum_i(LF_{IN})_{ij}}{\sum_i(LB_{IN})_{ij}}\times100\}_{j=1}^{n}$ are the percentage score of each frame. Since the foreground pixels are relatively smaller size compared to the background, heuristically the possible contender of the pure background frame indexes will have percentage score less than $\epsilon_2.$~Therefore, the frame indexes with least foreground movement are chosen from the following set:
$$
I = \{i:(\frac{\sum_i(LF_{IN})_{i1}}{\sum_i(LB_{IN})_{i1}}\times100,\frac{\sum_i(LF_{IN})_{i2}}{\sum_i(LB_{IN})_{i2}}\times100,\cdots, \frac{\sum_i(LF_{IN})_{in}}{\sum_i(LB_{IN})_{in}}\times100)\le \epsilon_2\}.
$$
Figure~\ref{gt_hist}~shows the initially estimated foreground histogram and the sums of the ground truth frames of the {\it Basic} scenario of the Stuttgart video sequence. 
Figure~\ref{perc_score} demonstrates the percentage score plot for the {\it Basic} scenario of the Stuttgart video sequence, the fountain sequence, and the airport hall sequence.~Originally, for the {\it Basic} scenario of the Stuttgart video sequence, there are 48 and 57 frames respectively in the last 200 frames and the entire sequence that have less than 5 foreground pixels.~Using the percentage score, our method picks up 49 and 58 frame indexes respectively.~Moreover,~comparing~Figure~\ref{perc_score}(a) and (b) with the ground truth frames in Figures \ref{gt_hist}~(b),~we see the effectiveness of the process in picking up the right background frame indexes on the Stuttgart video sequence.

With the automatically inferred diagonal weight matrix $W$, whose entries corresponding to the coarsely estimated background information are assigned a large value $\tilde{\lambda}$ and other diagonal entries are 1's, we conduct the remaining experiments. 

\subsubsection{Convergence of the algorithm}
In this subsection, we will show the convergence of our WSVT algorithm.~For a given $\epsilon>0$, the main stopping criteria of our WSVT algorithm is $|L_{k+1}-L_k|<\epsilon$ or if it reaches the maximum iteration.~To demonstrate the convergence of our algorithm as claimed in Theorem~\ref{theorem_3_3}, we run it on the {\it Basic} scenario of the Stuttgart artificial video sequence.~The weights were chosen using the idea explained in Subsection 4.1.1. We choose $\displaystyle{\tilde{\lambda}\in\{1,5,10,20\}}$ and $\epsilon$ is set to $10^{-7}$. Recall that in Theorem~\ref{theorem_3_3} we present $\|D_k-C_kW^{-1}\|\le O(\frac{1}{\mu_k})$ and $|L_{k+1}-L_k|\le O(\frac{1}{\mu_k})$ as $\mu_k\to\infty$.~We see that our proposed algorithm converges and the bounds we present on the iterates $\{C_k,D_k\}$ and the reconstruction error $L_k$ in Theorem~\ref{theorem_3_3} are valid.~To conclude, in Figure~\ref{conv}, we show that for any $\tilde{\lambda}>0$, there exists $\alpha,\beta\in\mathbb{R}$ such that $\|D_k-C_kW^{-1}\|_F\le\alpha/\mu_k$ and $|L_{k+1}-L_k|\le\beta/\mu_k$ as $\mu_k\to\infty,$ for $k=1,2,\cdots$.
 \begin{figure}
 	\centering
 	\begin{subfigure}{0.5\textwidth}
 		\includegraphics[height=2.4in,width=3.27in]{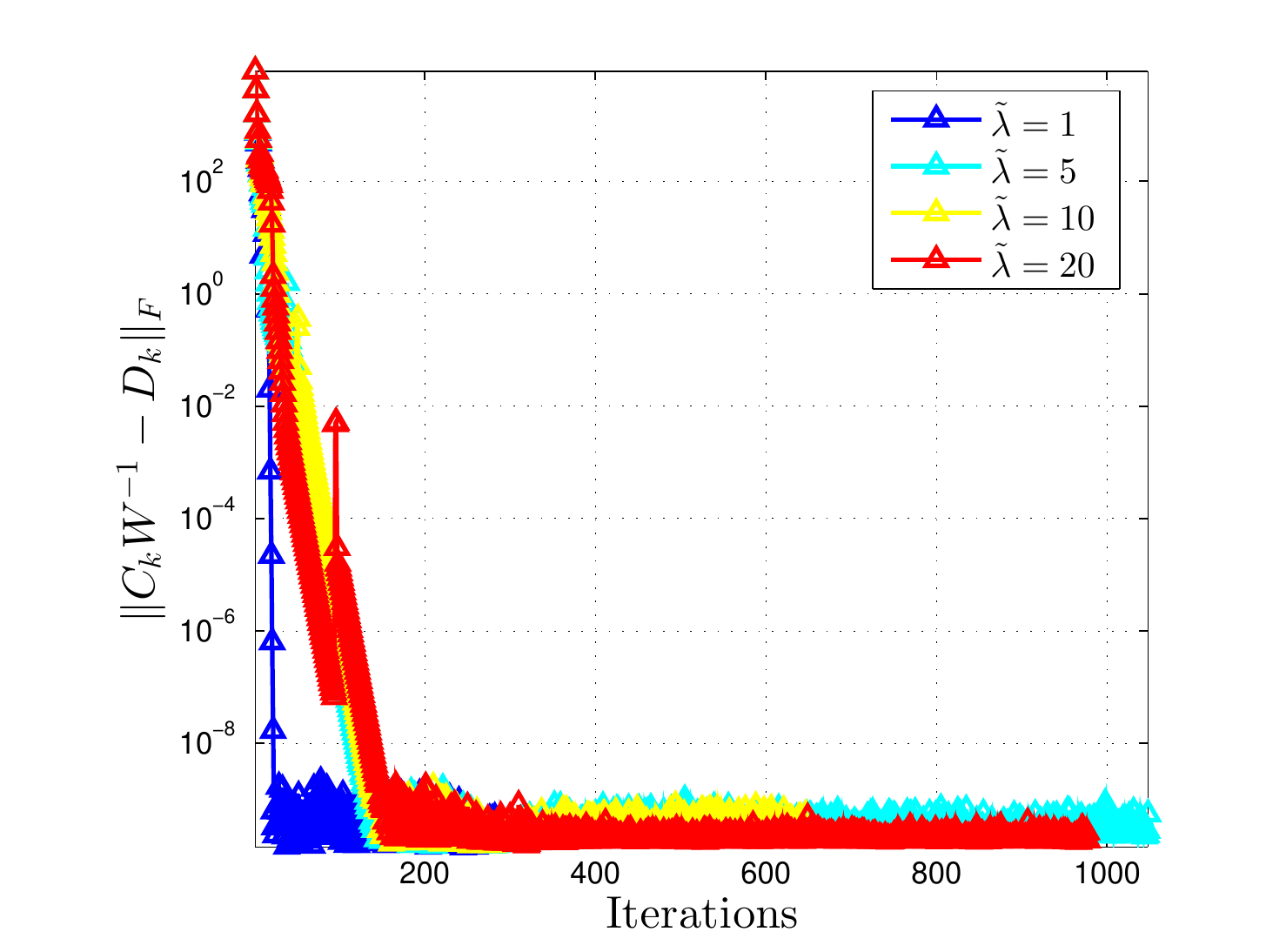}
 		\caption{}
 	\end{subfigure}\hspace{-1ex}
 	\begin{subfigure}{0.5\textwidth}
 		\includegraphics[height=2.4in]{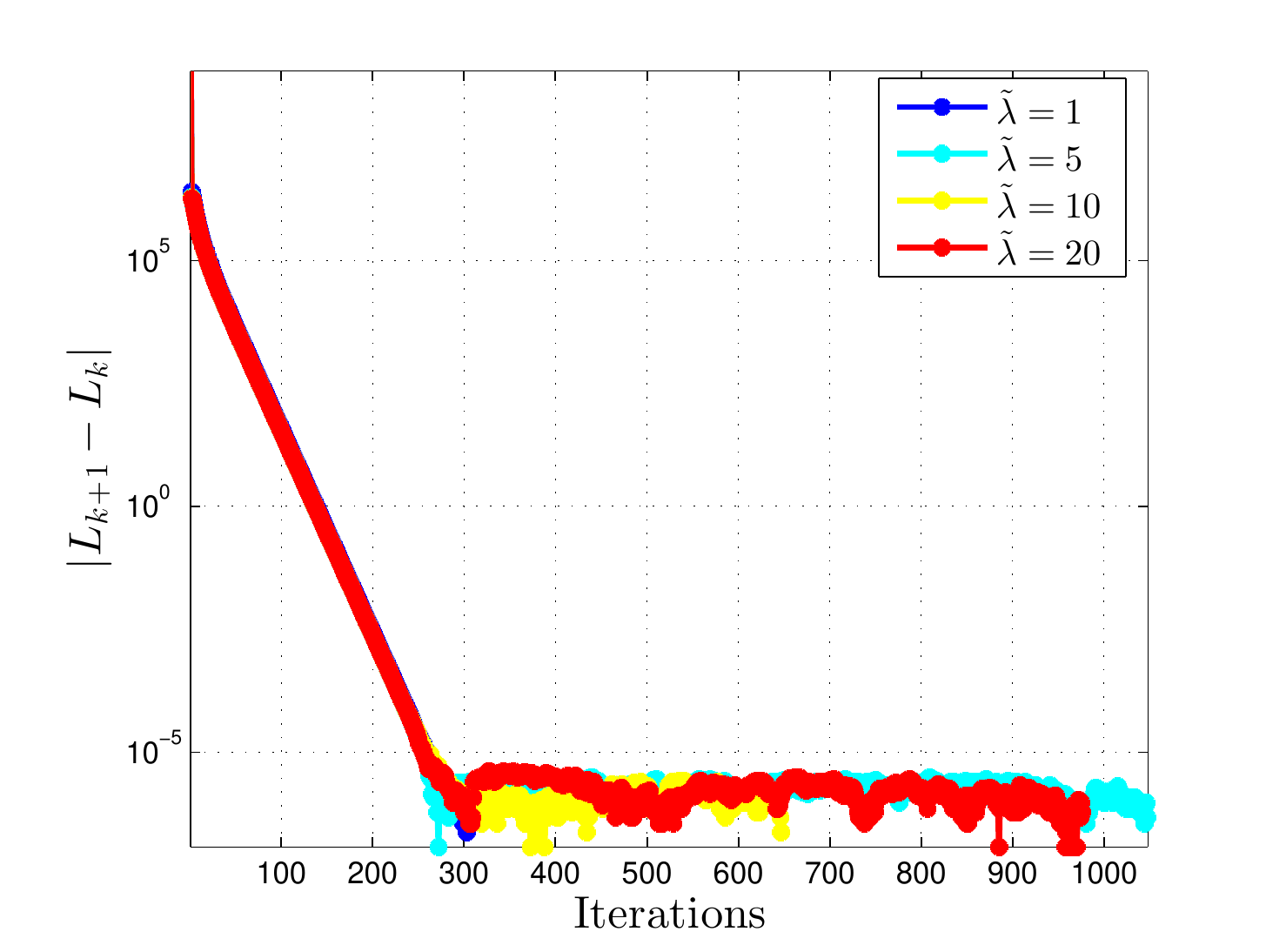}
 		\caption{}
 	\end{subfigure}
 	\caption{{\bf Convergence~of~the~algorithm.}~For $\displaystyle{\tilde{\lambda}\in\{1,5,10,20\}}$:~(a)~Iterations versus $\|D_k-C_kW^{-1}\|_F$, and~(b)~Iterations versus $|L_{k+1}-L_k|$.}\label{conv}
 \end{figure}

\subsubsection{Qualitative and Quantitative analysis}

In this section, we perform rigorous qualitative and quantitative comparison between WSVT, SVT, and RPCA algorithms on three different video sequences:~Stuttgart artificial video sequence, the airport hall sequence, and the fountain sequence.~For the quantitative comparison between different methods, we only use Stuttgart artificial video sequence. We use three different metrics for quantitative comparison: The receiver and operating characteristic~(ROC) curve,~peak signal-to-noise ratio~(PSNR) and the mean structural similarity index~(MSSIM)~\citep{mssim}.~They are all methods for measuring the similarity between two images -- in our scenario, the ground truth and estimated foreground.
\begin{figure}
	\centering 
	\includegraphics[width=\textwidth ]{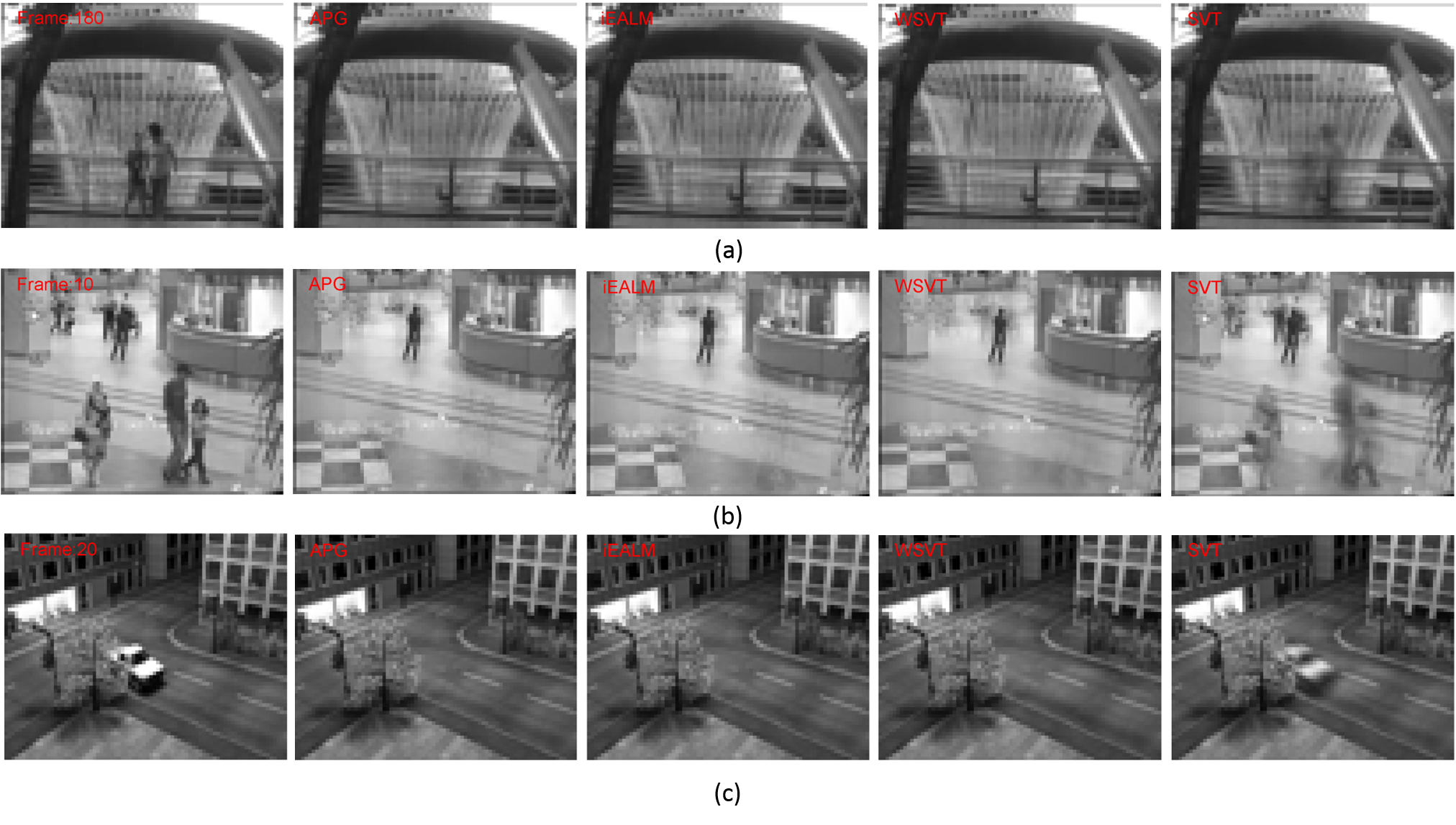}
	\caption{{\bf Qualitative analysis:}~From left to right:~Original, APG, iEALM, our WSVT, and SVT.~Background estimation results on~(from top to bottom):~(a)~fountain sequence, frame number 180 with static and dynamic foreground;~(b)~airport sequence, frame number 10 with static and dynamic foreground; (c)~Stuttgart video sequence~{\it Basic} scenario,~frame number 420 with dynamic foreground.}\label{qual_bg}
\end{figure}  

\begin{figure}
	\centering  \includegraphics[width=\textwidth ]{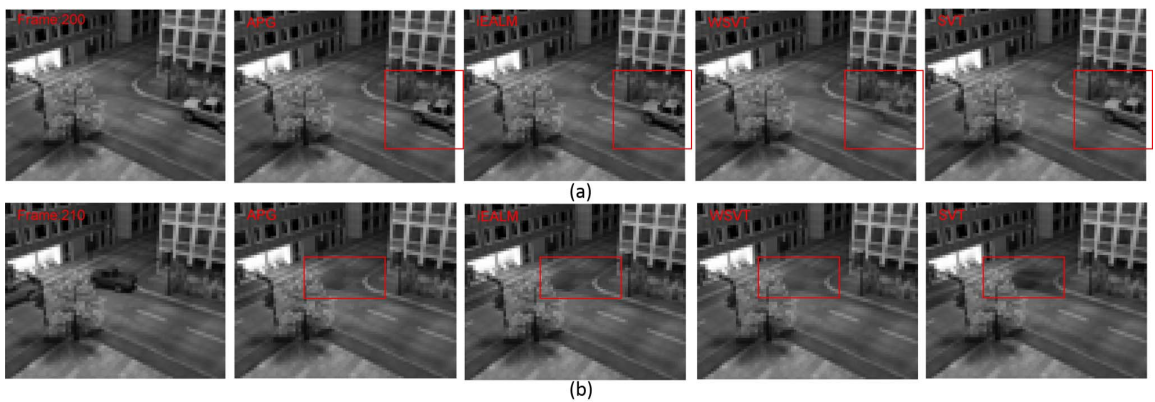}
	\caption{{\bf Qualitative analysis:}~From left to right:~Original, APG, iEALM, WSVT,~and SVT on Stuttgart video sequence {\it Basic} scenario:~(a) frame 600 with static foreground, methods were tested on last 200 frames;~(b)~frame 210 with dynamic foreground, methods were tested on 600 frames. WSVT provides a superior background estimation.}\label{qual_bg_2}
\end{figure}

Initially, we test each method on 200 resized video frames and the qualitative results are shown in Figure~\ref{qual_bg}.~We employ the process defined in Section 4.1.1 to adaptively choose the weighted frame indexes for WSVT.~Next, we test our method on the entire Stuttgart video sequence and compare its performance with the other unweighted low-rank methods.~Unless specified, a weight $\tilde{\lambda}=5$ is used to show the qualitative results for the WSVT algorithm in Figure~\ref{qual_bg} and~\ref{qual_bg_2}.~It is evident from Figure~\ref{qual_bg} that WSVT outperforms SVT and recovers the background as effectively as RPCA methods.~However, in Figure \ref{qual_bg_2} where there are both static and dynamic foreground objects, WSVT shows superior performance over all other methods including RPCA algorithms.

\paragraph{Quantitative analysis.}
\begin{figure}
	\centering
	\begin{subfigure}[b]{0.55\textwidth}
		\includegraphics[height=2.4in]{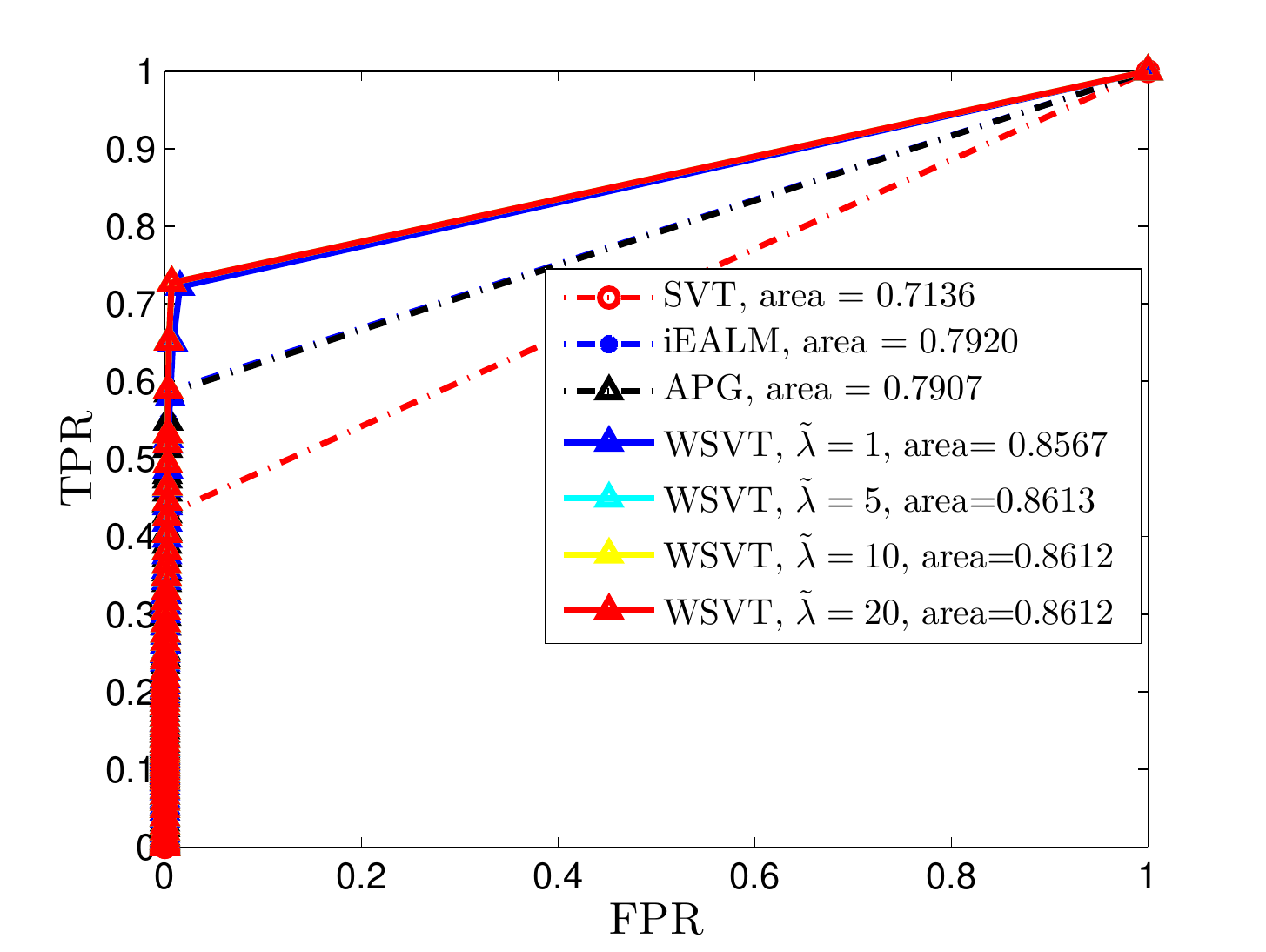}
		\caption{}
	\end{subfigure}%
	\begin{subfigure}[b]{0.55\textwidth}
		\includegraphics[height=2.4in]{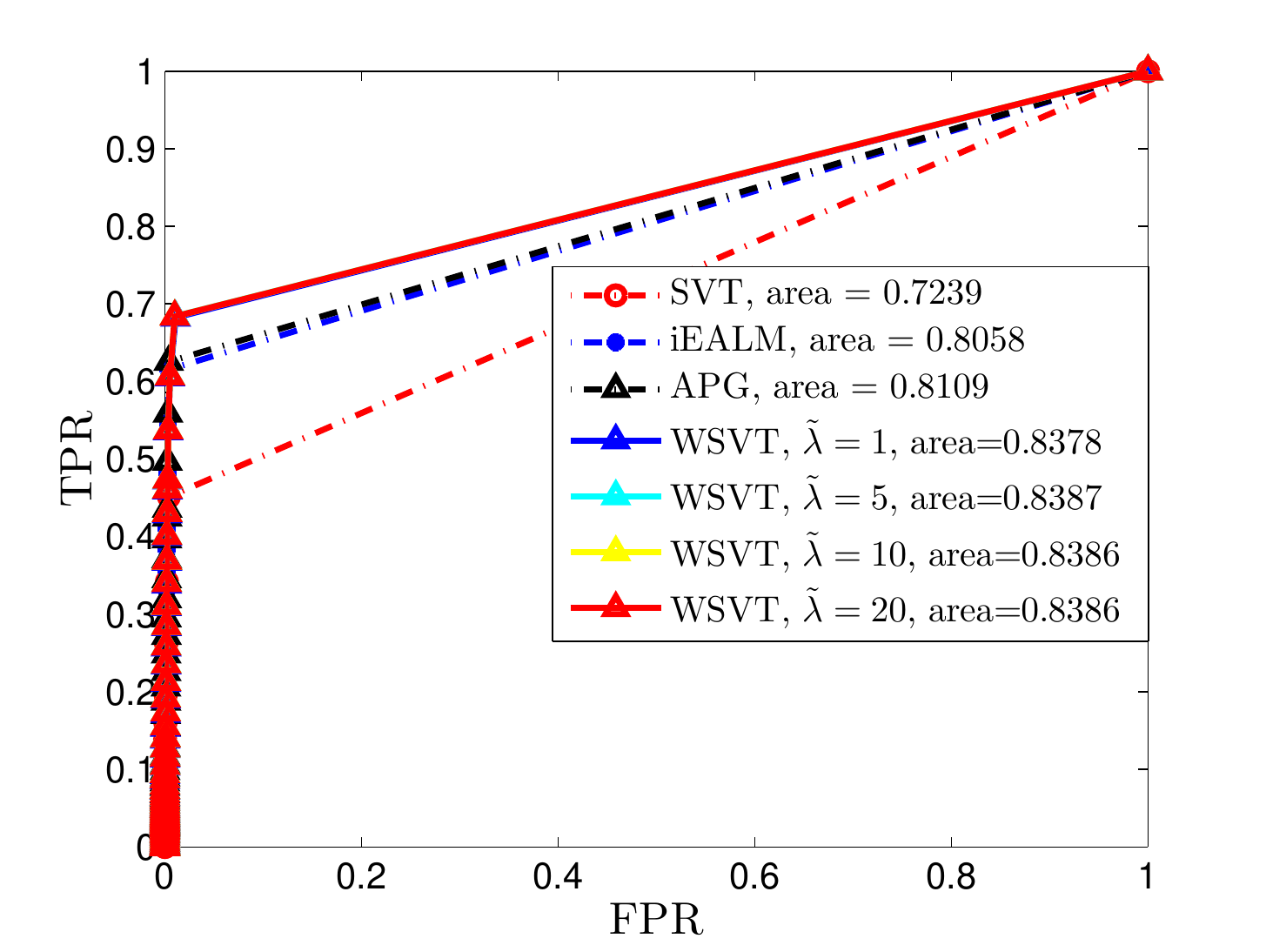}
		\caption{}
	\end{subfigure}
	\caption{ROC curve for the methods WSVT, SVT, iEALM, and APG on {\it Basic} scenario of the Stuttgart sequence:(a)~200 frames, and~(b)~600 frames. For WSVT we choose $\tilde{\lambda}\in\{1,5,10,20\}$. The performance gain by WSVT compare to iEALM, APG, and SVT are: 8.92\%,~8.74\%, and 20.68\% respectively on 200 frames~(with static foreground), and 4.07\%,~3.42\%, and 15.85\% respectively on 600 frames.}\label{roc_new}
\end{figure}
Note that WSVT uniformly removes the noise~(for example, the changing illumination, reflection on the buildings, and movement of the leaves of the tree for the Stuttgart sequence) from each video sequence. 
Inspired by the above observation, we propose a nonuniform threshold vector to plot the ROC curves and compare between the methods using the same metric. In Figure~\ref{roc_new}, we provide quantitative comparisons between the methods using this non-uniform threshold vector \verb+[0,15,20,25,30,31:2.5:255]+.~This way we can reduce the number of false negatives and increase the number of true positives detected by WSVT as it appears in Figure~\ref{qual_bg},~\ref{qual_bg_2}~. To conclude, WSVT has better quantitative and qualitative results when there is a static foreground in the video sequence.
\begin{figure}
	\centering
	\begin{subfigure}[b]{0.55\textwidth}
		\includegraphics[height=2.4in]{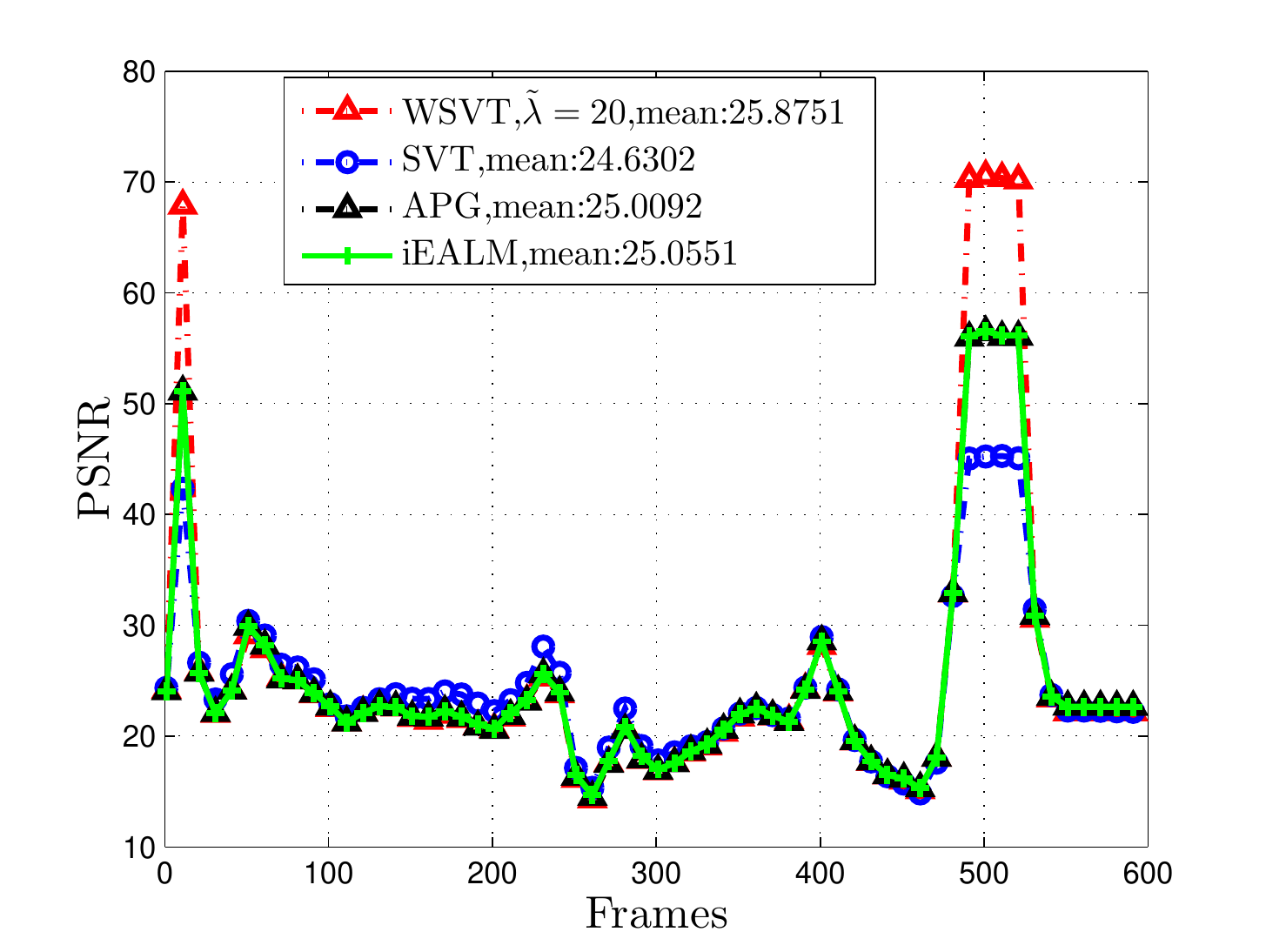}
		\caption{}
	\end{subfigure}%
	\begin{subfigure}[b]{0.55\textwidth}
		\includegraphics[height=2.4in]{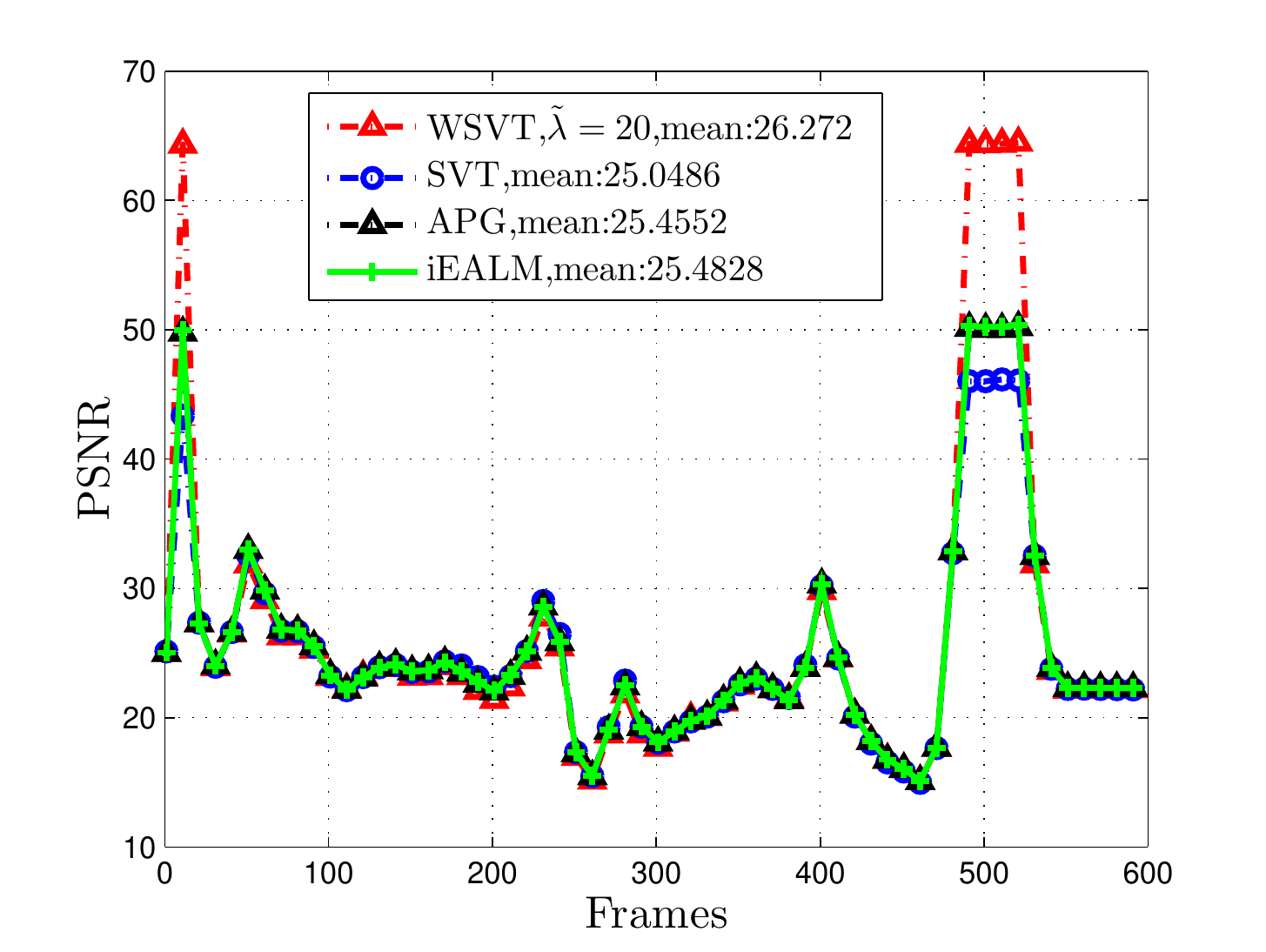}
		\caption{}
	\end{subfigure}
	\caption{PSNR of each video frame for WSVT, SVT, iEALM, and APG~on:(a)~{\it Basic} scenario, and~(b)~{\it Noisy night} scenario.~For WSVT we choose $\tilde{\lambda}=20$.~In both scenarios WSVT has increased PSNR when a weight is introduced corresponding to the frames with least foreground movement.}\label{psnr}
\end{figure}

A robust background estimation model used for surveillance may efficiently deal with the dynamic foreground objects present in the video sequence.~Additionally, it is expected to handle several other challenges, which include, but are not limited to: gradual or sudden change of illumination, a dynamic background containing non-stationary objects and a static foreground, camouflage, and sensor noise or compression artifacts.~The initial success of the WSVT in Figure~\ref{roc_new} on the {\it Basic} scenario of the Stuttgart sequence motivated us to demonstrate a more rigorous quantitative measure of the recovered foreground obtained by different background estimation models on a more challenging scenario. For this purpose, we test the effectiveness of the models on the {\it Noisy night} scenario of the Stuttgart video sequence.~This is a low-contrast nighttime scene, with increased sensor noise, camouflage, and sudden illumination change and has 600 frames with identical foreground and background objects as in the {\it Basic} scenario. Now, we demonstrate the quantitative analysis results of different methods on these two scenarios using PSNR and SSIM.~We use the same set of parameters for every model and follow Section 4.1.1 to choose weighted frames for WSVT. 
\begin{figure}
	\centering
	\begin{subfigure}[b]{0.55\textwidth}
		\includegraphics[height=2.4in]{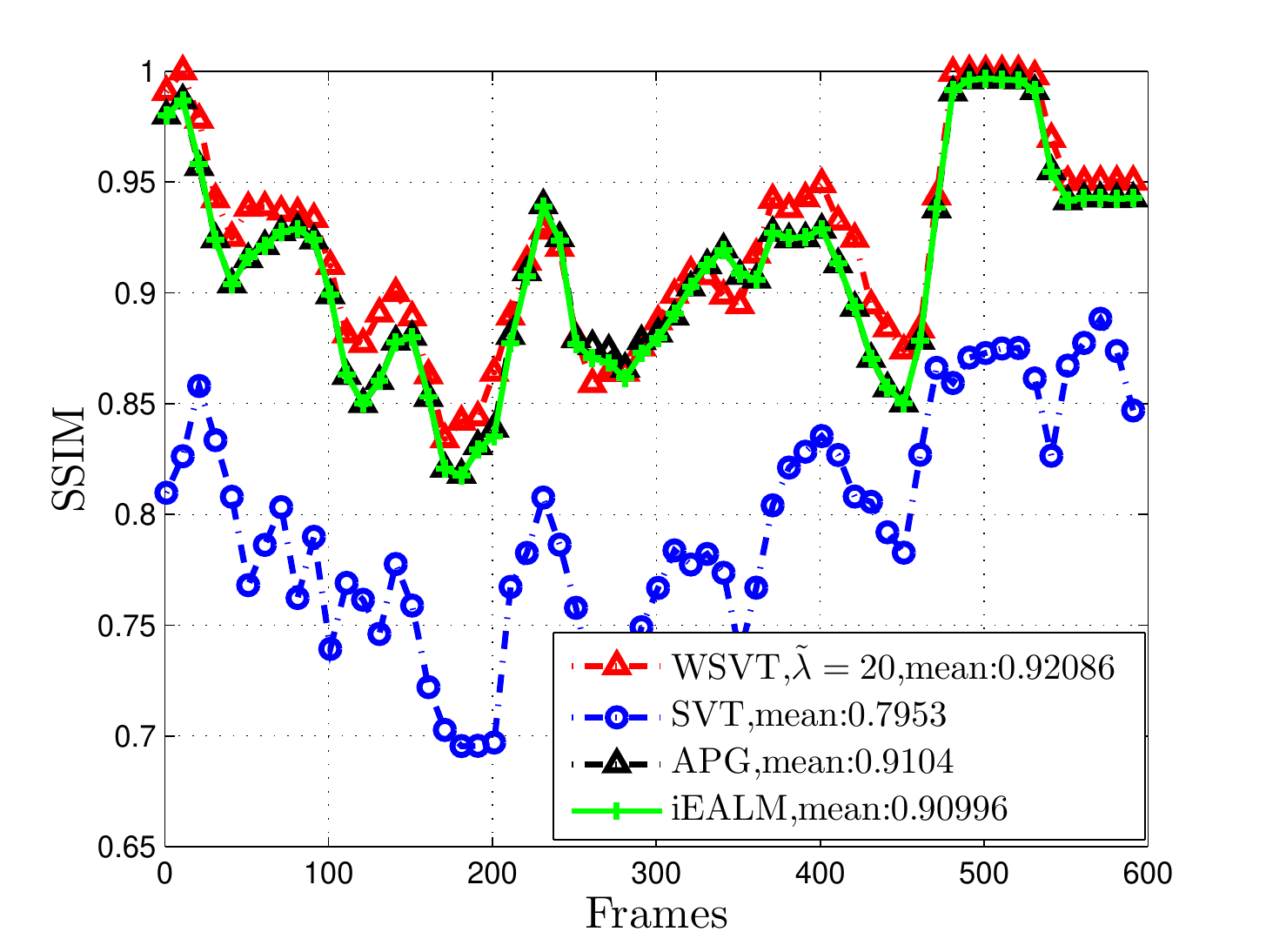}
		\caption{}
	\end{subfigure}%
	\begin{subfigure}[b]{0.55\textwidth}
		\includegraphics[height=2.4in]{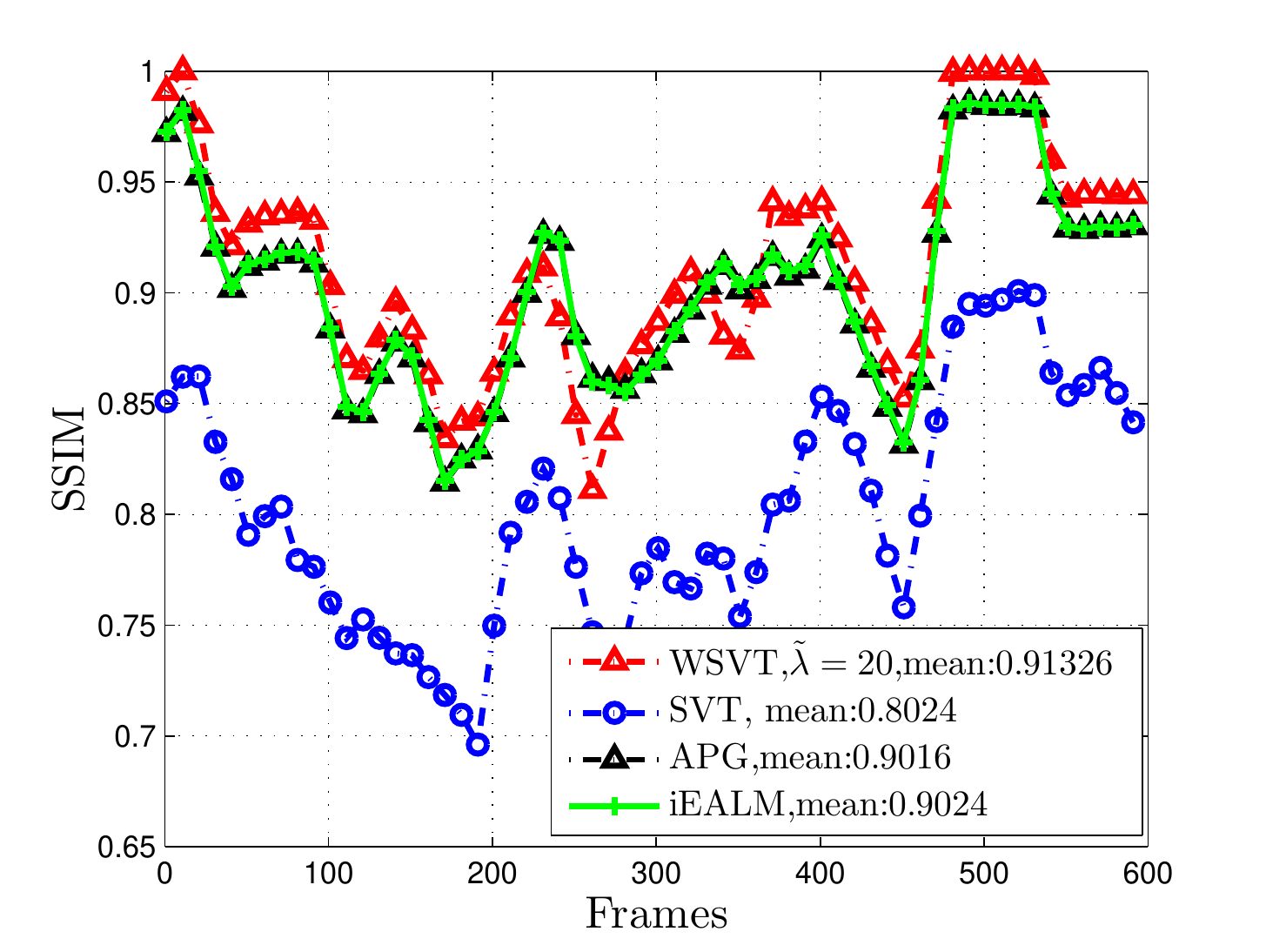}
		\caption{}
	\end{subfigure}
	\caption{Mean SSIM for different methods on 600 frames of Stuttgart sequence:~(a)~{\it Basic} scenario,~(b)~{\it Noisy night} scenario.~For WSVT we choose $\tilde{\lambda}=20$.~Indeed WSVT with weight outperforms other methods.}\label{mssim}
\end{figure}

PSNR is defined as $10log_{10}$ of the ratio of the peak signal energy to the mean square error~(MSE) observed between the processed video signal and the original video signal.~If $E(:,i)$ denotes each reconstructed vectorized foreground frame in the video sequence and $G(:,i)$ be the corresponding ground truth frame, then PSNR is defined as $10log_{10}\frac{{\rm M}_I^2}{{\rm MSE}}$, where ${\rm MSE}= \frac{1}{mn}\|E(:,i)-G(:,i)\|_2^2$ and ${\rm M}_I$ is the maximum possible pixel value of the image. In our case the pixels are represented using 8 bits per sample, and therefore, ${\rm M}_I$ is 255. The proposal is that the higher the PSNR, the better degraded image has been reconstructed to match the original image and the better the reconstructive algorithm.  This would occur because we wish to minimize the MSE between images with respect the maximum signal value of the image. For a reconstructed image with 8 bits bit depth, the PSNR are between 30 and 50 dB, where the higher is the better.

\begin{figure}[H]
	\includegraphics[width=\textwidth]{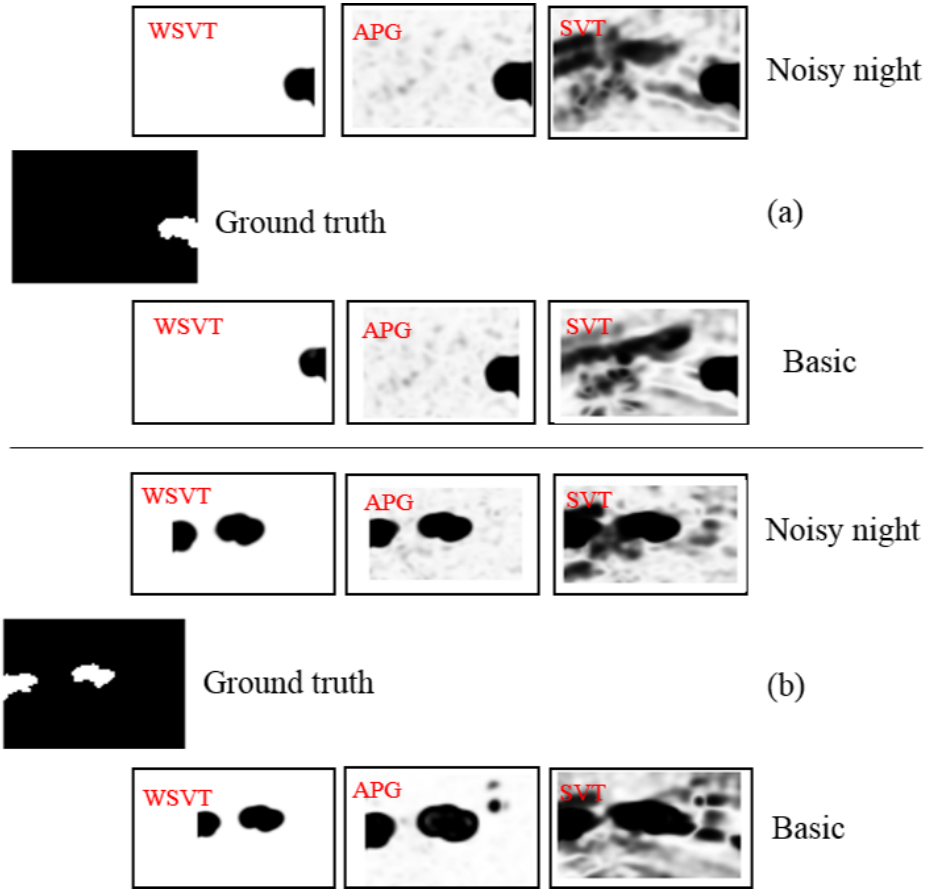}
	\caption{SSIM map of foreground frame:~(a) 600 and (b)~210.~Since iEALM and APG have same recovered foreground we only provide SSIM map for APG.~Indeed WSVT with $\tilde{\lambda}=20$ has the best foreground reconstruction.~The foreground recovered by SVT is very poor and RPCA recovers fragmentary foreground.}\label{ssim_map}
\end{figure}

In Figure~\ref{psnr}, we demonstrate the PSNR and mean PSNR of different methods on the Stuttgart sequence on two different scenarios. For their implementation, we calculate the PSNR of the entire video sequence for each scenario and compare with the ground truth frames. It is evident from Figure~\ref{psnr} that weight improves the PSNR of WSVT significantly over the other existing methods in both scenarios of the Stuttgart sequence. More specifically, we see that in both scenarios the weighted background frames or the frames with least foreground movement have higher PSNR than all other models traditionally used for background estimation.~In Figure~\ref{psnr}~(a), for $\tilde{\lambda}=20$ in WSVT, PSNR of the frames with least foreground movement is above 65 dB.~Similarly, in Figure~\ref{psnr}~(b), for the {\it Noisy night} scenario, the PSNR of the frames with least foreground movement is about 65 dB when $\tilde{\lambda}=20$.

Recently, the structural similarity index~(SSIM) is considered to be one of the most robust quantitative measures and claimed to agree with the human visual perception better compare to the widely used standard measures, such as, MSE and PSNR~\citep{mssim}.~In calculating the MSSIM, we perceive the information how the high-intensity regions of the image are coming through the noise, and consequently, we pay much less attention to the low-intensity regions.~We remove the noisy components from the recovered foreground, $E$, by using the threshold $\epsilon_1$ calculated in Section 4.1.1, such that we set the components below $\epsilon_1$ in $E$ to 0.~In order to calculate the MSSIM of each recovered foreground video frame, we consider an $11\times 11$ Gaussian window with standard deviation~($\sigma$) 1.5 and consider the corresponding ground truth as the reference image. 

The SSIM index of WSVT in Figure~\ref{mssim} shows the superior performance of WSVT especially in presence of the static foreground in both scenarios.~Additionally, from Figure~\ref{mssim} we observe the comparable or superior performance of WSVT on rest of the frames of both scenarios, except a minor deterioration in some frames of the {\it Noisy night} scenario. Moreover, the superior performance of WSVT over other models in Figure~\ref{psnr} and~\ref{mssim} show the validity of the method proposed in Section 4.1.1 in choosing the correct weighted frame indexes.~In Figure~\ref{ssim_map}, we present SSIM index map of two sample foreground video frames of Stuttgart video sequence from both scenarios, which clearly indicate fragmentary foreground recovered by the RPCA algorithms.

\subsection{Facial Shadow Removal}

We also conduct some experiments on the removal of shadow and specularity from face images under varying illuminations and camera positions. The idea was proposed by~\citet{basri}, that the images of the same face exposed to a wide variety of lighting conditions can be approximated accurately by a low-dimensional linear subspace. More specifically, the images under distant, isotropic lighting lie close to a 9-dimensional linear subspace which is known as the {\it harmonic plane}. 	
\begin{figure}[H]
 \includegraphics[width=\textwidth]{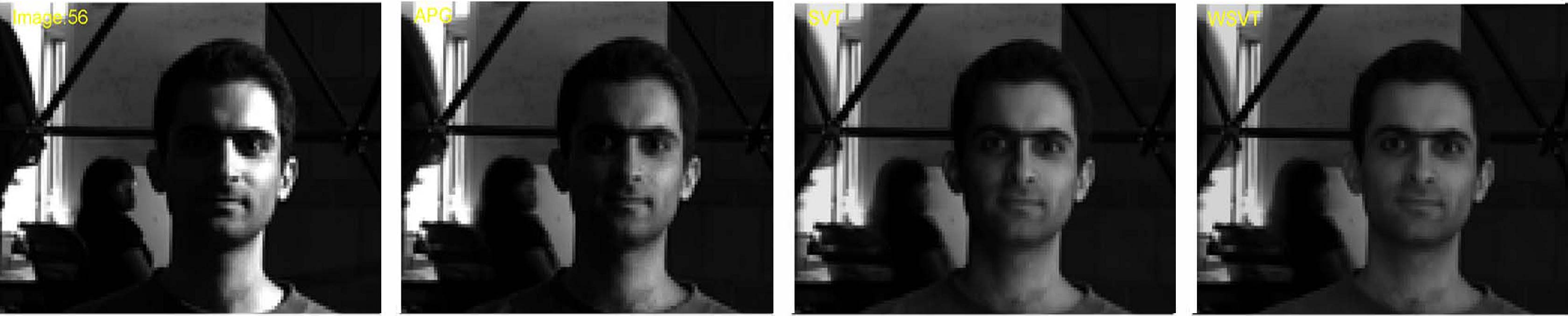}
	\caption{{\bf Qualitative results for facial shadow removal.}~Left to right: Original image~(person B11,~image 56,~partially shadowed) followed by the low-rank approximations using~APG,~SVT,~and WSVT, respectively.~WSVT removes the shadows and specularities uniformly form the face image especially from the left half of the image.}\label{face}
\end{figure}

For facial shadow removal we use test images from the Extended Yale Face Database B~(\citep{yale};~see also, http://vision.ucsd.edu/content/extended-yale-face-database-b-b).~We choose 65 sample images and perform our experiments.~The images are resized to [96,128]; originally they are [480,640]. We set a uniform threshold $10^{-7}$ for each algorithm. For APG and iEALM, $\lambda={1}/{\sqrt{{\rm max}\{m,n\}}}$, and the parameters for iEALM are set to $\mu=1.5, \rho=1.25$~\citep{APG,LinChenMa}. For WSVT we choose $\tau=500,~\mu = 15,\;{\rm and}\;\rho=3$.~The weight matrix is set to $I_n$.~For SVT, we choose $\tilde{\tau}=\tau/\mu$.~Since we have no access to the ground truth for this experiment we will only provide the qualitative result.~Note that the rank of the low-dimensional linear model recovered by RPCA methods is 35, while SVT and WSVT are both able to find a rank 4 subspace.~Figure~\ref{face} shows that WSVT outperforms RPCA algorithms and SVT in terms of the shadow removal results.

\subsection{Computation time comparison}
Tables~\ref{table_1}~and~\ref{table_2} contrast the computation cost of our WSVT to other methods on both sets of experiments: background estimation and facial shadow removal. It is easy to see that WSVT is more efficient than the RPCA algorithms (iEALM and APG) especially when the video sequence is long. Recall that the performance of WSVT is better than or on par with RPCAs. We thus expect that WSVT can be an efficient and effective alternative to RPCAs in more applications.

\begin{table}[H]
	\caption{Average computation time~(in seconds) for each algorithm in background estimation on the Stuttgart sequence {\it Basic} scenario.~All experiments were performed on a computer with 3.1 GHz Intel Core i7 processor and 8GB memory.} \label{table_1}
	\begin{center}
		\begin{tabular}{|c|c|c|c|c|c|}
			\hline No. of frames & iEALM & APG & SVT & WSVT \\
			\hline 200  & 4.994787 & 14.455450  & 0.085675 & 1.4468\\
			\hline 600  & 131.758145 & 76.391438  & 0.307442 & 8.7885334\\
			\hline
		\end{tabular}
	\end{center}
\end{table}
\begin{table}[H]
	\caption{Average computation time~(in seconds) for each algorithm in shadow removal.} \label{table_2}
	\begin{center}
		\begin{tabular}{|c|c|c|c|c|c|}
			\hline No. of images & iEALM & APG & SVT & WSVT \\
			\hline 65  & 1.601427 & 10.221226  & 0.039598 & 1.047922\\
			\hline
		\end{tabular}
	\end{center}
\end{table}
\section{Conclusion and Future Research}

We formulated and studied a weighted version of the SVT method (WSVT) and proposed a numerical algorithm to solve WSVT using augmented Lagrangian function and alternating direction method.~We managed to establish the convergence of our algorithm. Through real data, we demonstrated that by using the weight, which can be learned from the data, we can gain better performance than RPCAs in several applications in computer vision. In particular, our algorithm shows better quantitative results in Stuttgart video sequence and facial shadow removal compare to other state-of-the-art unweighted low-rank algorithms.

Determining the weights adaptively is a challenging and mathematically involved problem and a good starting point for future research. Initially we tuned the parameters for our method using a grid search, but in future the parameters may be trained jointly with the low-rank matrix in a more robust fashion. We also plan on testing the potential applications of our method by exploring research areas like domain adaptation and video summarization.

\section*{Appendix A.}
In this section, we will provide the proofs of the convergence results stated in Section~3. 
\label{app:theorem}
To establish our main results, we need two lemmas. First, we establish the boundedness of $Y_k$: 
\begin{lemma}\label{theorem_3_1}
	The sequence $\{Y_k\}$ is bounded.
\end{lemma}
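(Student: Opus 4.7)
The plan is to exploit the first-order optimality condition of the $D$-subproblem to express $Y_{k+1}$ as (minus $\tau$ times) a subgradient of the nuclear norm at $D_{k+1}$, and then use the fact that the subdifferential of $\|\cdot\|_*$ is uniformly bounded in spectral norm.

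First, I would write down the optimality condition for the update
\[
D_{k+1}=\arg\min_D\Bigl\{\tau\|D\|_*+\tfrac{\mu_k}{2}\|D-C_{k+1}W^{-1}+\tfrac{1}{\mu_k}Y_k\|_F^2\Bigr\}.
\]
Since the objective is convex in $D$, the necessary and sufficient condition is
\[
0\in\tau\,\partial\|D_{k+1}\|_*+\mu_k\bigl(D_{k+1}-C_{k+1}W^{-1}\bigr)+Y_k.
\]
Now substitute the multiplier update $Y_{k+1}=Y_k+\mu_k(D_{k+1}-C_{k+1}W^{-1})$ to rewrite the inclusion as
\[
-Y_{k+1}\in\tau\,\partial\|D_{k+1}\|_*,\qquad k=0,1,2,\ldots.
\]

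Second, I would invoke the standard characterization of $\partial\|\cdot\|_*$: if $D_{k+1}=U\Sigma V^T$ is a (thin) SVD, then
\[
\partial\|D_{k+1}\|_*=\{UV^T+N:\,U^TN=0,\;NV=0,\;\|N\|_2\le 1\}.
\]
Every element of this set therefore has spectral norm at most $1$, and consequently Frobenius norm at most $\sqrt{\min\{m,n\}}$. Combining this with the inclusion from the previous paragraph yields
\[
\|Y_{k+1}\|_F\le \tau\sqrt{\min\{m,n\}},\qquad k=0,1,2,\ldots.
\]
Together with the initialization $Y_0=0$ fixed by Algorithm~\ref{alg_1}, this shows that $\{Y_k\}$ is uniformly bounded, independently of $k$ and of the sequence $\{\mu_k\}$.

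I do not anticipate a serious obstacle here: the argument is essentially structural, relying only on convexity of the $D$-subproblem and the well-known boundedness of the nuclear-norm subdifferential. The one small thing to be careful about is making sure the optimality condition is correctly aligned with the multiplier update — i.e., that the combination $\mu_k(D_{k+1}-C_{k+1}W^{-1})+Y_k$ appearing in the subgradient inclusion is exactly $Y_{k+1}$, so that the bound is on the \emph{next} iterate rather than the current one. Note also that this lemma is purely about $\{Y_k\}$ and does not require any monotonicity or summability assumption on $\{\mu_k\}$, which is why it can serve as a clean starting point for the subsequent convergence analysis leading to Theorems~\ref{theorem_3_3} and \ref{theorem_3_4}.
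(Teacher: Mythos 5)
Your proposal is correct and follows essentially the same route as the paper: both derive the inclusion $-Y_{k+1}\in\tau\,\partial\|D_{k+1}\|_*$ from the optimality condition of the $D$-subproblem combined with the multiplier update, and then bound $\|Y_{k+1}\|_2$ by $\tau$ via the boundedness of the nuclear-norm subdifferential in the dual (spectral) norm. The only cosmetic difference is that you spell out the explicit characterization $\partial\|D_{k+1}\|_*=\{UV^T+N:\,U^TN=0,\;NV=0,\;\|N\|_2\le 1\}$, whereas the paper cites Theorem~4 of \citet{LinChenMa} (and \citet{watson}) for the same bound.
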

\begin{proof}~~By the optimality condition for $D_{k+1}$ we have,
\begin{align*}0 \in \partial_DL(C_{k+1},D_{k+1},Y_k,\mu_k).\end{align*}
	So,
	\begin{eqnarray*}
		0 &\in& \tau\partial\|D_{k+1}\|_{\ast} +Y_k +\mu_k(D_{k+1} -C_{k+1}W^{-1}).
	\end{eqnarray*}
	Therefore,
	$-Y_{k+1} \in \tau\partial\|D_{k+1}\|_{\ast}.$
	By using Theorem 4 in~\citep{LinChenMa},~(see also~\citet{watson}), we conclude that the sequence $\{Y_k\}$ is bounded by $\tau$ in the dual norm of $\|\cdot\|_*$. But the dual of $\|\cdot\|_*$ is the spectral norm,~$\|\cdot\|_2$.~So $\|Y_{k+1}\|_2\le\tau$. Hence $\{Y_k\}$ is bounded.
\end{proof}
Next, we show the boundedness of the sequence $\{\hat{Y}_{k}\}$ which requires a different argument.
\begin{lemma}\label{theorem_3_2}~We have the following:\\
		(i) The sequence $\{C_k\}$ is bounded.\\
		(ii) The sequence $\{\hat{Y}_{k}\}$ is bounded.
\end{lemma}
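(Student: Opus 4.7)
\medskip

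The plan is to reduce part~(ii) to part~(i) via the $C$-subproblem optimality and to bound $\{C_k\}$ through a decay estimate on the augmented Lagrangian. Setting $\nabla_C L(C,D_k,Y_k,\mu_k)=0$ at $C=C_{k+1}$ and rearranging yields
\[C_{k+1}-XW \;=\; \hat Y_{k+1}(W^{-1})^T, \qquad\text{i.e.,}\qquad \hat Y_{k+1}=(C_{k+1}-XW)W^T.\]
Consequently $\|\hat Y_{k+1}\|_F\le\|W\|_2\,\|C_{k+1}-XW\|_F$, so (ii) is an immediate corollary of (i).

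To prove (i), the strategy is to show that $L_k:=L(C_k,D_k,Y_{k-1},\mu_{k-1})$ is bounded above. The alternating minimization in the $(C,D)$ block with $(Y_k,\mu_k)$ fixed produces the descent inequality $L(C_{k+1},D_{k+1},Y_k,\mu_k)\le L(C_k,D_k,Y_k,\mu_k)$. Combining the definition of $L$ with the multiplier update $Y_k-Y_{k-1}=\mu_{k-1}(D_k-C_kW^{-1})$, a short calculation produces the exact identity
\[L(C_k,D_k,Y_k,\mu_k) \;=\; L_k + \tfrac{\mu_{k-1}+\mu_k}{2}\,\|D_k-C_kW^{-1}\|_F^2,\]
so $L_{k+1}-L_k\le \tfrac{\mu_{k-1}+\mu_k}{2}\|D_k-C_kW^{-1}\|_F^2$. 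By Lemma~\ref{theorem_3_1}, $\|Y_k-Y_{k-1}\|_F$ is bounded uniformly, hence $\|D_k-C_kW^{-1}\|_F=\|Y_k-Y_{k-1}\|_F/\mu_{k-1}=O(1/\mu_{k-1})$; together with $\mu_k=\rho\mu_{k-1}$ this forces $L_{k+1}-L_k=O(1/\mu_{k-1})$. Because $\sum_k 1/\mu_k$ is a convergent geometric series, $\{L_k\}$ is bounded above by some constant $M$.

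Completing the square in $L_k$ isolates $\|XW-C_k\|_F$:
\[L_k \;=\; \tfrac12\|XW-C_k\|_F^2 + \tau\|D_k\|_\ast + \tfrac{\mu_{k-1}}{2}\bigl\|D_k-C_kW^{-1}+\tfrac{1}{\mu_{k-1}}Y_{k-1}\bigr\|_F^2 - \tfrac{1}{2\mu_{k-1}}\|Y_{k-1}\|_F^2.\]
The three middle summands are nonnegative and the trailing correction is bounded in absolute value by $\tfrac{1}{2\mu_0}\sup_k\|Y_k\|_F^2$ (again via Lemma~\ref{theorem_3_1}), so $\tfrac12\|XW-C_k\|_F^2\le M+\text{const}$. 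Thus $\{\|C_k\|_F\}$ is bounded, which gives (i) and, by the first paragraph, (ii).

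The main obstacle is the upper boundedness of $\{L_k\}$: a priori the switch from $(Y_{k-1},\mu_{k-1})$ to $(Y_k,\mu_k)$ inside the Lagrangian can increase its value, so the descent inequality alone is not enough. The key insight is that this rise is exactly the residual term $\tfrac{\mu_{k-1}+\mu_k}{2}\|D_k-C_kW^{-1}\|_F^2$, which the previously established boundedness of $\{Y_k\}$ forces to be $O(1/\mu_{k-1})$ and hence summable under the geometric schedule $\mu_k=\rho\mu_{k-1}$.
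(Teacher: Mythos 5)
Your proof is correct, but part (i) follows a genuinely different route from the paper's. For (ii) you do exactly what the paper does: read off $\hat Y_{k+1}=(C_{k+1}-XW)W^{T}$ from the $C$-optimality condition and reduce to (i). For (i), however, the paper eliminates $D_k$ between the optimality relation $(C_{k+1}W^{-1}-X)WW^{T}=Y_k+\mu_k(D_k-C_{k+1}W^{-1})$ and the multiplier update, obtaining the recursion $\|C_{k+1}\|_F\le(1+\lambda/\mu_k)^{-1}\|C_k\|_F+C/\mu_k$ (after diagonalizing $W^{T}W$, with $\lambda>0$ its smallest eigenvalue) and concludes by induction; you instead prove upper boundedness of the Lagrangian values $L_k$ via the alternating-descent inequality, the exact identity $L(C_k,D_k,Y_k,\mu_k)=L_k+\tfrac{\mu_{k-1}+\mu_k}{2}\|D_k-C_kW^{-1}\|_F^2$, and the summability of $O(1/\mu_{k-1})$, then extract $\|XW-C_k\|_F$ by completing the square. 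There is no circularity: every ingredient you use (the bound $\|D_k-C_kW^{-1}\|_F=O(1/\mu_{k-1})$ and the descent chain) rests only on Lemma~\ref{theorem_3_1}, not on the statement being proved. Your argument is the more standard ALM/ADMM Lyapunov-style one; it avoids the spectral decomposition of $W^{T}W$ and delivers, as a by-product, the boundedness of $\{L_k\}$ and the bound $L_{k+1}-L_k\le O(1/\mu_k)$ that the paper only establishes later in Theorem~\ref{theorem_3_3}(ii), so it effectively merges the two results. The paper's recursion, in exchange, yields an explicit contraction constant for $\|C_k\|_F$ without invoking the descent property of the alternating minimization. One cosmetic slip: after completing the square there are only two nonnegative ``middle'' summands ($\tau\|D_k\|_\ast$ and the quadratic penalty), not three, but the inequality you draw from them is correct.
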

\begin{proof}~~We start with the optimality of $C_{k+1}$:
	\begin{equation*}0 = \frac{\partial}{\partial C} L(C_{k+1},D_{k},Y_k,\mu_k).\end{equation*}
	We get
	\begin{equation}\label{x0}
	(C_{k+1}W^{-1}-X)W W^{T} = Y_k+\mu_k (D_k-C_{k+1}W^{-1}).
	\end{equation}
\begin{enumerate}[(i)]
	\item Solving for $D_k$ in (\ref{x0}), we arrive at
	\begin{equation*}D_k=C_{k+1}(W^{-1}+\frac{1}{\mu_k} W^T)-\frac{1}{\mu_k}(XW W^T-Y_k).\end{equation*}
	Next, using the definition of $\{Y_k\}$ to write
	\begin{equation*}
	D_k=C_k W^{-1}-\frac{1}{\mu_{k-1}} Y_{k-1}+\frac{1}{\mu_{k-1}} Y_k
	\end{equation*}
	and now equating the two expressions for $D_k$ to obtain
	\begin{align*}
	& C_kW^{-1}-\frac{1}{\mu_{k-1}} Y_{k-1}+\frac{1}{\mu_{k-1}} Y_k
	=C_{k+1}(W^{-1}+\frac{1}{\mu_k} W^T)-\frac{1}{\mu_k}(XW W^T-Y_k).\end{align*}
	To simplify the notations, we will use $O(\frac{1}{\mu_k})$ to denote matrices whose norm is bounded by a constant (independent of $k$) times $\frac{1}{\mu_k}$. So, by using the boundedness of $\{Y_k\}$, the above equation can be written as
	\begin{equation}\label{x1}
	C_{k+1}(I+\frac{1}{\mu_k}W^TW)=C_k+O(\frac{1}{\mu_k}).
	\end{equation}
	Diagonalize the positive definite matrix $W^TW$ as $W^TW=Q\Lambda Q^T$ and use it in~(\ref{x1}) to get
	\begin{equation*}
	C_{k+1}Q(I+\frac{1}{\mu_k}\Lambda)=C_kQ+O(\frac{1}{\mu_k}).
	\end{equation*}
	Taking the Frobenius norm on both sides and using the triangle inequality yield
	\begin{equation}\label{x2}
	\|C_{k+1}Q(I+\frac{1}{\mu_k}\Lambda)\|_F\leq \|C_kQ\|_F+O(\frac{1}{\mu_k}).
	\end{equation}
	Since the diagonal matrix $I+\frac{1}{\mu_k}\Lambda$ has all diagonal entries no smaller than $1+\lambda/\mu_k$ where $\lambda>0$ denotes the smallest eigenvalue of $W^TW$, we see that
	\begin{equation*}\|C_{k+1}Q\|_F\leq (1+\frac{\lambda}{\mu_k})^{-1} \|C_{k+1}Q(I+\frac{1}{\mu_k}\Lambda)\|_F.\end{equation*} Thus,  (\ref{x2}) implies
	\begin{equation*}
	\|C_{k+1}Q\|_F\leq (1+\frac{\lambda}{\mu_k})^{-1}\|C_kQ\|_F+O(\frac{1}{\mu_k}),
	\end{equation*}
	which, by the unitary invariance of the norm, is equivalent to
	\begin{equation*}
	\|C_{k+1}\|_F\leq (1+\frac{\lambda}{\mu_k})^{-1}\|C_k\|_F+\frac{C}{\mu_k}~{\rm for~all}~k,
	\end{equation*}
	for some constant $C>0$ independent of  $k$.
	Finally, using the fact that $\mu_{k+1}=\rho\mu_k$ with $\rho>1$, we see that the above inequality implies (by mathematical induction) that $\|C_k\|_F\leq C^*$ for some constant $C^*>0$ (say, $C^*=C(\mu_0+\lambda)/(\mu_0\lambda)$ would work). This completes the proof of the boundedness of $\{C_k\}$.\\
\item[(ii)] Equation (\ref{x0}) gives us $\hat{Y}_{k+1}=(C_{k+1}W^{-1}-X)WW^T$ by using the definition of $\hat{Y}_{k+1}$, and so, the boundedness of $\{\hat{Y}_k\}$ follows immediately from the boundedness of $\{C_k\}$ established in (i) above.
\end{enumerate}
\end{proof}

With the boundedness results of the sequences $\{Y_k\},\{C_k\},$ and $\{\hat{Y}_k\}$, we are ready to prove Theorems~\ref{theorem_3_3} and ~\ref{theorem_3_4}.
\newpage
\begin{proof}{\bf of Theorem~\ref{theorem_3_3}} 
\begin{enumerate}[(i)]
	\item Since $Y_{k+1} - \hat{Y}_{k+1} = \mu_k(D_{k+1} - D_{k})$ we have
\begin{equation*}
D_{k+1} - D_{k} = \frac{1}{\mu_k}(Y_{k+1} - \hat{Y}_{k+1}).
\end{equation*}
So, by the boundedness of $\{Y_k\}$ and $\{\hat{Y}_k\}$ from Lemma~\ref{theorem_3_1} and \ref{theorem_3_2}, for all $k$, we have
\begin{equation*}
\|D_{k+1} - D_{k}\|=O(\frac{1}{\mu_k}),
\end{equation*}
which, by comparison test, implies the convergence of $\{D_k\}$. Now, recall that
\begin{align*}C_{k+1} &= (XW +\mu_kD_k(W^{-1})^T+Y_k(W^{-1})^T)(I +\mu_k(W^TW)^{-1})^{-1}.\end{align*} So, we see that $\{C_k\}$ is convergent as well.
Next,  from the definition of $\{Y_k\}$, we have
\begin{equation*}\frac{1}{\mu_k}(Y_{k+1}-Y_k)=D_{k+1}-C_{k+1}W^{-1}.\end{equation*}
Thus,
\begin{equation}\label{dk}
\|D_{k+1}-C_{k+1}W^{-1}\|=O(\frac{1}{\mu_{k}}).
\end{equation}
\item 
We have, $L_{k+1}=L (C_{k+1},D_{k+1},Y_k,\mu_k) \le L (C_{k+1},D_{k},Y_k,\mu_k)\le L (C_{k},D_{k},Y_k,\mu_k)$. Note that, $L (C_{k},D_{k},Y_k,\mu_k)=L_k+ \frac{\mu_k+\mu_{k-1}}{2}\|D_k-C_kW^{-1}\|_F^2$.~Therefore,
\begin{equation*}L_{k+1}-L_k \le  \frac{\mu_k+\mu_{k-1}}{2}\|D_k-C_kW^{-1}\|_F^2.\end{equation*}
Since $\mu_{k+1}=\rho\mu_k$, we find,~using~(\ref{dk}),
\begin{align*}
L_{k+1}-L_k &\le \frac{\mu_k+\mu_{k-1}}{2}\|D_k-C_kW^{-1}\|_F^2 = O(\frac{1}{\mu_{k}}), {\rm as}\;\;k \rightarrow \infty.
\end{align*}
This completes our proof of Theorem~\ref{theorem_3_3}.
\end{enumerate}
\end{proof}

\begin{proof}{\bf of Theorem~\ref{theorem_3_4}} By Theorem~\ref{theorem_3_3}~(i) and by taking the limit as $k\to \infty$,  we get
\begin{equation}\label{xl1}
C_{\infty}W^{-1}=D_{\infty}.
\end{equation}
Note that
	\begin{align}\label{1st ineq}
	L(C_{k},D_{k},Y_{k-1},\mu_{k-1})&=\min_{C,D}L(C,D,Y_{k-1},\mu_{k-1})\nonumber\\
	&\le \min_{CW^{-1}=D}L(C,D,Y_{k-1},\mu_{k-1})\nonumber\\
	&\le \|XW-C_{\infty}\|_F^2+\tau\|D_{\infty}\|_* \nonumber\\
	&=f_{\infty},
	\end{align}
where we applied (\ref{xl1}) to get the last inequality.	Note also that
	\begin{align*}
	&\|XW-C_{k}\|_F^2+\tau\|D_{k}\|_*\\
	&=L(C_{k},D_{k},Y_{k-1},\mu_{k-1})-\langle Y_{k-1},D_{k}-C_{k}W^{-1}\rangle -\frac{\mu_{k-1}}{2}\|D_{k}-C_{k}W^{-1}\|_F^2,
	\end{align*}
	which, by using the definition of $Y_{k}$ and (\ref{1st ineq}), can be further rewritten into 
	\begin{align}\label{1 st side}
	&\|XW-C_{k}\|_F^2+\tau\|D_{k}\|_*\nonumber\\
	&=L(C_{k},D_{k},Y_{k-1},\mu_{k-1})-\langle Y_{k-1},\frac{1}{\mu_{k-1}}(Y_{k}-Y_{k-1})\rangle -\frac{\mu_{k-1}}{2}\|\frac{1}{\mu_{k-1}}(Y_{k}-Y_{k-1})\|_F^2\nonumber\\
	&\le f_{\infty}+\frac{1}{2\mu_{k-1}}(\|Y_{k-1}\|_F^2-\|Y_{k}\|_F^2).
	\end{align}
	Next, by using triangle inequality we get
	\begin{align}\label{2 nd side}
	&\|XW-C_{k}\|_F^2+\tau\|D_{k}\|_*\nonumber\\
	&\ge\|XW-D_{k}W\|_F^2+\tau\|D_{k}\|_*-\|C_{k}-D_{k}W\|_F^2\nonumber\\
	&\ge f_{\infty}-\|\frac{1}{\mu_{k-1}}(Y_{k-1}-Y_{k})W\|_F^2\nonumber\\
	&=f_{\infty}-\frac{1}{\mu_{k-1}^2}\|(Y_{k-1}-Y_{k})W\|_F^2.
	\end{align}
	Combining~(\ref{1 st side}) and~(\ref{2 nd side}), we obtain the desired result.
	\end{proof}

\bibliography{sample}

\end{document}